\newtheorem{theorem}{Theorem}
\numberwithin{theorem}{section}
\newtheorem{proposition}[theorem]{Proposition}
\newtheorem{corollary}[theorem]{Corollary}
\theoremstyle{definition} 
\newtheorem{definition}[theorem]{Definition}
\theoremstyle{remark}
\newtheorem{example}[theorem]{Example}
\newtheorem{setup}[theorem]{Setup}
\newtheorem{remark}[theorem]{Remark}
\numberwithin{theorem}{section}
\numberwithin{equation}{section}
\providecommand{\C}{\mathbb C} 
\providecommand{\FF}{\mathbb F}
\providecommand{\Frac}{\operatorname{Frac}}
\providecommand{\charac}{\operatorname{char}}
\providecommand{\Spec}{\operatorname{Spec}}
\providecommand{\Frob}{\mathrm{F}}
\providecommand{\A}{\mathbb A}
\providecommand{\PP}{\mathbb P}
\providecommand{\pr}{\mathrm{pr}}
\providecommand{\OO}{\mathscr O} 
\providecommand{\T}{\mathrm{T}}
\providecommand{\U}{\mathscr U}
\providecommand{\Hom}{\operatorname{Hom}}
\providecommand{\Sym}{\operatorname{Sym}}
\providecommand{\tors}{\mathrm{tors}}
\providecommand{\irr}{\operatorname{irr}}
\providecommand{\nr}{\operatorname{nr}}
\providecommand{\snr}{\operatorname{snr}}
\providecommand{\codim}{\operatorname{codim}}
\providecommand{\displayroot}[2]{\sqrt[\leftroot{-1}\uproot{2}{#1}]{#2}}
\providecommand{\pthroot}[1]{\displayroot p {#1}}
\title{On the degree of irrationality of complete intersections}
\author{Lucas Braune and Taro Yoshino}
\date{\today}
\address{Munich, Germany}
\email{lucasbraune@gmail.com}
\address{Graduate School of Mathematical Sciences, The University of Tokyo, 3-8-1 Komaba,
Meguro-ku, Tokyo, 153-8914, Japan}
\email{yotaro@ms.u-tokyo.ac.jp}
\begin{document}

\begin{abstract}
  Let $X\subseteq \PP^N$ be a very general complete intersection of hypersurfaces of degrees $d_1,\dotsc,d_c$ over the complex numbers.
  Let $p$ be a prime number.
  If $\sum d_i \ge \frac p {p+1} (N+2p-2) + c(p-1)$, then every generically finite, dominant map from $X$ to a rational variety has degree at least $p$.
\end{abstract}

\maketitle

\section{Introduction}

In this paper, we study the degree of irrationality of complete intersections  of hypersurfaces in projective space. By definition, the \emph{degree of irrationality} of a variety $X$ is the minimum degree of a dominant, generically finite rational map from $X$ to a rational variety. This invariant gives a measure of how far $X$ is from being rational. We denote it by $\irr(X)$.

If $X\subseteq \PP^N_\C$ is a smooth complete intersection of multi-degree $(d_1,\dotsc,d_c)$ and $\sum d_i \geq N + 1$, then $X$ is not rational, because its canonical sheaf carries a nonzero global section.
By a result of Bastianelli \emph{et al.}\ \cite[Theorem 1.10]{BdPELU17}, the degree of irrationality of $X$ satisfies 
\begin{equation*}
  \irr(X) \ge \sum_{i=1}^c d_i - N + 1.
\end{equation*}

It is harder to prove that complete intersections with $\sum d_i \le N$ are irrational.
In 1995, Kollár \cite{kollar1995, Kollar1996} made a breakthrough by showing that a very general hypersurface of degree $d$ in $\PP^N_\C$ is not ruled, and therefore not rational, provided that
\begin{equation*}
  d\ge 2\left\lceil \dfrac{N+2}3 \right\rceil
\end{equation*}
and $N\ge 4$.
A key feature of Kollár's argument is a degeneration to positive characteristic.

Chen and Stapleton \cite{CS20} used Kollár's argument to establish lower bounds for the degree of irrationality of very general hypersurfaces.
They proved that, if $X\subseteq \PP^N_\C$ is a very general hypersurface of degree $d\ge N - \sqrt{N+1}/4$ and dimension $\ge 3$, then
\begin{equation*}
  \irr(X) \ge \sqrt{N+1} / 4.
\end{equation*}

The first author \cite{Braune19b} extended Kollár's result to complete intersections, proving that a complete intersection of $c$ very general hypersurfaces of degrees $d_1,\dotsc, d_c$ in $\PP^N_\C$ it not ruled, and therefore not rational, provided that
\begin{equation*}
  \sum_{i=1}^c d_i \ge \dfrac 2 3 N + c + 1.
\end{equation*}

In this paper, we combine the techniques of \cite{CS20,Braune19b} to establish the following result.

\begin{theorem}
  \label{thm:invariants-intro}
  Let $X\subset \PP^N_\C$ be a very general complete intersection of positive dimension and multi-degree $(d_1,\dotsc, d_c)$ over $\C$.
  Let $p$ be a prime number such that $d_1,\dotsc, d_c \ge p$.
  If
  \begin{equation*}
    \sum_{i=1}^c d_i \ge \frac p {p+1} (N+2p-2) + c(p-1),
  \end{equation*}
  then every generically finite, dominant rational map from $X$ to a ruled variety has degree $\ge p$.
  In particular, $\irr(X)\ge p$.
\end{theorem}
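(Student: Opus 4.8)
The plan is to follow Kollár's strategy of degenerating to positive characteristic, made quantitative in the manner of Chen–Stapleton \cite{CS20} and adapted to complete intersections as in \cite{Braune19b}. The property at issue — \emph{admitting a dominant, generically finite rational map of degree $<p$ onto a ruled variety} — is encoded by bounded families of maps and targets, hence is a countable union of constructible conditions on the parameter space of complete intersections. So I would argue by contradiction: if the very general complex $X$ admitted such a map, one of these constructible pieces would be dense in the parameter space, hence contain a Zariski-open subset; spreading this subset out over a finitely generated subring of $\C$ and reducing modulo a suitable prime, I would obtain a complete intersection $X_0$ of multidegree $(d_1,\dots,d_c)$ over $\overline{\FF}_p$ that still admits a degree-$<p$ map to a ruled variety. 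It therefore suffices to produce a single $X_0$ over $\overline{\FF}_p$ admitting no such map, and the numerical hypothesis
\begin{equation*}
  \sum_{i=1}^c d_i \ge \frac p{p+1}(N+2p-2) + c(p-1)
\end{equation*}
will be invoked only in characteristic $p$.

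First I would construct $X_0$ with equations adapted to the Frobenius. Using the hypothesis $d_i \ge p$, I would take each $f_i$ to be a generic form in which $p$-th powers of the coordinates appear, so that $X_0$ carries a purely inseparable structure compatible with the relative Frobenius. The purpose of this choice is that, on a resolution $\pi\colon \widetilde X_0 \to X_0$, the Frobenius-adapted equations produce a big line bundle $L$ together with a nonzero sheaf map $L \to \Omega^n_{\widetilde X_0}$, where $n=\dim X_0$; concretely this is a supply of global $n$-forms with controlled zero divisor, generating $L$. Computing $\deg L$ by adjunction for complete intersections is where the stated inequality enters: the summand $c(p-1)$ reflects the $c$ equations, each contributing a $p$-th-power correction of degree $p-1$ to the adjunction formula, while the factor $\tfrac p{p+1}$ encodes the degree $p$ of Frobenius, and bigness of $L$ holds precisely above the threshold. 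I expect controlling the singularities of $X_0$, and checking that the inclusion into $\Omega^n_{\widetilde X_0}$ persists on the resolution, to be the most delicate part of this step.

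With $L$ in hand I would run the form-pullback argument of \cite{CS20}. Given a dominant, generically finite $\phi\colon X_0 \dashrightarrow Z$ of degree $e$ with $Z$ ruled, I resolve to a morphism $\widetilde\phi\colon \widetilde X_0 \to Z$. Since $Z$ is swept out by rational curves and $\PP^1$ carries no nonzero $n$-forms of the relevant type, the sections of $L \hookrightarrow \Omega^n_{\widetilde X_0}$ pulled through $\widetilde\phi$ must vanish along the preimages of a general ruling curve. Quantifying this forced vanishing against the bigness of $L$ bounds $e$ from below. The key point is that the obstruction lives in a Frobenius-twisted sheaf, so it can be absorbed only by the inseparable part of $\phi$, whose degree is a power of $p$; hence either $\phi$ is generically separable — and then the bigness of $L$ yields a nonzero global $n$-form that would have to vanish on the ruled $Z$, a contradiction recovering the $e=1$ non-ruledness of Kollár–Braune — or $\phi$ has a nontrivial inseparable factor, forcing $p \mid e$ and in particular $e \ge p$.

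The main obstacle, beyond the singularity and resolution bookkeeping, is making this degree count sharp enough to extract exactly $e \ge p$ rather than a weaker bound. This requires separating the inseparable and separable parts of $\phi$ cleanly, excluding the separable case outright via the unquantified non-ruledness of $\widetilde X_0$, and showing that any inseparable factor contributes a factor of $p$ to the degree. Combining these with the complete-intersection adjunction computation — and verifying that the displayed inequality is precisely the condition guaranteeing $L$ big — then yields $\irr(X) \ge p$, completing the proof.
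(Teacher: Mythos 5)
Your proposal follows the paper's broad strategy (reduction to characteristic $p$, differential-form obstructions, inseparability of low-degree maps), but it has two genuine gaps. The first is structural: you reduce to producing a single complete intersection $X_0$ of multi-degree $(d_1,\dotsc,d_c)$ in $\PP^N_{\overline\FF_p}$ with no degree-$<p$ map to a ruled variety, to be built from ``Frobenius-adapted'' equations inside $\PP^N$. No such construction is known, and it is precisely the point of the Koll\'ar--Mori technique, followed by the paper, that the good characteristic-$p$ object is \emph{not} a complete intersection of this multi-degree: the paper degenerates (Proposition \ref{prop:mori}, after replacing each $d_i$ by $d_i-r_i$ to restore divisibility by $p$ --- this, not an adjunction correction, is where the summand $c(p-1)$ comes from) to the fiber product $Y$ of purely inseparable $p$-th root covers $X[\pthroot{f_i}]$ of a \emph{lower-degree} complete intersection $X$; this $Y$ lives in the total space of a vector bundle over $X$, not in $\PP^N$, and the obstruction is an invertible sheaf $Q=\omega_X\otimes\det(E)^{\otimes p}$ injecting into $\Omega_B^{N-c-m}/(\Omega_B^{N-c-m})_\tors$ on a modification $B$ of $Y$ --- middle-degree forms, not top-degree forms on a resolution of a complete intersection. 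Moreover, even granting constructibility and spreading out, your reduction to one example over $\overline\FF_p$ does not close the argument: ``very general'' is vacuous over the countable field $\overline\FF_p$, and the dense open set produced by your contradiction hypothesis is not explicit, so a single example cannot be guaranteed to lie in it. The paper avoids this by applying the Matsusaka--Chen--Stapleton semicontinuity theorem (Theorem \ref{thm:gen-matsusaka}, Corollary \ref{cor:very-general}) to a family over a DVR whose special fiber is the degenerate object $Y$, transferring the bound to the geometric generic fiber and then to the very general $X$ over a universal domain.

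The second gap is your separable/inseparable dichotomy. You claim the separable case is ``excluded outright via the unquantified non-ruledness of $\widetilde X_0$,'' i.e., that a nonzero global form on $\widetilde X_0$ contradicts the existence of a separable generically finite dominant map to a ruled variety $Z$. This is false: forms pull back from $Z$, they do not push forward, and the trace of a nonzero form can vanish; a non-ruled variety can separably dominate a ruled one (any smooth projective curve of positive genus admits a separable finite map to $\PP^1$, and surfaces of general type dominate $\PP^2$ by generic projection). This is exactly why the Chen--Stapleton argument is quantitative: to rule out separable maps of degree $\le n$ one needs the sections of the obstruction sheaf to separate enough general points, so that over a general point of the target one can pick a section vanishing at all but one point of the fiber, forcing a nonzero trace. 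In the paper this yields $\snr(Y)>\lfloor(\delta+1)/2\rfloor$ (Proposition \ref{prop:obstruction-to-ruledness}), the numerical hypothesis is used to guarantee $\delta\ge 2p-3$, hence $\snr(Y)\ge p$, and only then does Remark \ref{rmk:deg-insep} (inseparable maps have degree divisible by $p$) give $\nr(Y)\ge\min(\snr(Y),p)=p$. As written, your argument would at best show that \emph{inseparable} maps have degree $\ge p$, leaving separable maps of degree $2,\dotsc,p-1$ unaddressed; it also misses the paper's preliminary case split, in which $\sum d_i\ge N+p-1$ is handled directly by top-degree forms (Corollary \ref{cor:no-degeneration}) and the complementary inequality is needed to verify the dimension hypotheses of Theorem \ref{thm:nonruledness-precise}.
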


The meanings of ``very general'', ``ruled'' and other standard terms used in this introduction are recalled in Subsection \ref{subsec:notation} of this paper.
We derive Theorem \ref{thm:invariants-intro} from Theorem \ref{thm:nonruledness-precise} in Section \ref{sec:proof of the main theorem}.
The reader seeking to compare Theorems \ref{thm:invariants-intro} and \ref{thm:nonruledness-precise} will note that the latter is stated in terms of the invariant $\nr(X)$, which is introduced in Definition \ref{def:invariants} below.
The conclusion of Theorem \ref{thm:invariants-intro} is equivalent to the inequality $\nr(X) \ge p$.

\subsection{Background}

The main result of Bastianelli \emph{et al.}\ \cite{BdPELU17} states that, if $X \subseteq \PP^N_\C$ is a very general hypersurface of degree $d \ge 2N-1$, then $\irr(X) = d - 1$.

Totaro \cite{Totaro2016} reused Koll\'ar's argument to prove that a very general hypersurface of degree $d$ in $\PP^N_\C$ is not stably rational under the slightly weaker hypotheses that $d\ge 2 \lceil (N+1)/3 \rceil$ and $N\ge 4$;
by definition, a variety $X$ is \emph{stably rational} if there exists $m\ge 0$ such that $X\times \PP^m$ is rational.

Cheltsov and Wotzlaw \cite{CW04} extended Kollár's argument to complete intersections, proving that a very general complete intersection of multi-degree $(d_1,\dotsc, d_c)$ in $\PP^N_\C$ is not ruled, provided that
\begin{equation*}
  d_1 \ge 2 \left\lfloor \dfrac{N+2 - \sum_{i\ge 2}d_i}{3} \right\rfloor.
\end{equation*}
and $N-c \ge 3$.
Chatzistamatiou and Levine \cite{CL16} extended Totaro's argument in similar fashion, proving a stable irrationality result for complete intersections satisfying similar hypotheses.

Today, the best irrationality result for hypersurfaces is due to Schreieder \cite{Schreieder2019}, which Nicaise--Otem \cite{NO20} generalised to complete intersections in the following form: a very general complete intersection $X \subseteq \PP^N_\C$ with 
\begin{equation*}
  d_1 \ge \max\left(
  \log_2\left( N - 1 - \textstyle\sum_{i\ge 2}d_i \right)+2,
  d_2,\dotsc, d_c, 4
  \right)
\end{equation*}
and $N - \sum_{i\ge 2} d_i\ge 2$ is not stably rational, and therefore not rational.

\subsection{Notation and Terminology}

\label{subsec:notation}

Let $k$ be a field. A \textit{complete intersection} in a projective space $\PP^N_k$ is a closed subscheme $X\subseteq\PP^N_k$ that can be defined by the vanishing of $c$ homogenous polynomials in the coordinates of $\PP^N_k$, where $c$ is the codimension of $X$ in $\PP^N_k$.

A variety is an integral scheme of finite type over an algebraically closed field.

\begin{remark}
  A smooth complete intersection of positive dimension over an algebraically closed field is a variety. This follows from the fact that every positive dimensional complete intersection is connected.
\end{remark}
A property holds for a \textit{very general} point of a variety $X$ if it holds for all closed points of X away from the union of countably many proper subvarieties.

A variety is \textit{ruled} if it is birationally equivalent to the product of another variety with the projective line.

The \emph{torsion} of a sheaf of modules $F$ on a variety $X$ is the kernel $F_\tors$ of the natural map $F\to F\otimes_{\OO_X} k(X)$, where $k(X)$ denotes the function field of $X$.

Given a variety $X$, we denote
by $\Omega_X$ its sheaf of Kähler differentials;
by $\Omega_X^i$ its sheaf of degree-$i$ differential forms $\wedge^i \Omega_X$.

The following table enumerates further notions and symbols that we will define near their first use in this paper.

\medskip

\begin{center}
  \begin{tabular}{|l|l|l|}
    \hline Notion                    & Symbol                & Definition                 \\ \hline
    Degree of nonruledness           & $\nr(X)$              & \ref{def:invariants}
    \\
    Degree of separable nonruledness & $\snr(X)$             & \ref{def:snr}
    \\
    Separation of general points     &                       & \ref{def:separates-pts}
    \\
    Universal domain                 & \                     & \ref{def:universal-domain}
    \\
    Cyclic cover                     & $X[\displayroot m s]$ & \ref{def:cyclic-covers}
    \\
    \hline
  \end{tabular}
\end{center}

\medskip
\subsection{Acknowledgement}
Part of this work was carried out while the first author was a Postdoctoral Scholar at the University of Washington. We thank Sándor Kovács for valuable discussions.
We are also grateful to Nathan Chen and David Stapleton for giving us helpful comments and suggestions.
The second author thanks his supervisor Yoshinori Gongyo and his senior Tasuki Kinjo for their encouragement.

\section{Obstructions to ruledness}

In this section, we establish minor extensions of results due to Chen--Stapleton and Matsusaka.
These extensions concern the following birational invariant, which measures how far a variety is from being ruled.

\begin{definition}
  \label{def:invariants}
  Let $X$ be a variety. The \emph{degree of nonruledness} $\nr(X)$ is the smallest natural number $d$ such that there exists a dominant rational map of degree $d$ from $X$ to a ruled variety.
\end{definition}

As far as we know, the degree of nonruledness did not appear explicitly in the literature prior to this paper.
Its definition is analogous to that of the degree of irrationality, which Chen and Stapleton \cite{CS20} used to phrase their results.
Denoting the degree of irrationality of a variety $X$ by $\irr(X)$, we trivially have
\begin{equation*}
  \mathrm{nr}(X) \le \mathrm{irr}(X)
\end{equation*}
for all $X$.
This inequality is strict, for example, if $X$ is ruled, but not rational.

Roughly speaking, the results in this section say
\begin{enumerate}
  \item that lower bounds for $\nr(X)$ can be derived from the existence of global differential forms on $X$ (Propositions \ref{prop:snr-top-forms} and \ref{prop:obstruction-to-ruledness}); and
  \item this invariant is lower semi-continuous, that is, it cannot ``jump up'' in families (Theorem \ref{thm:gen-matsusaka}).
\end{enumerate}

\subsection{Differential forms}
\label{subsec:forms}

A generically finite, dominant map between varieties $f : X\to Y$ is separable if, and only if, its differential $df : \T_X \to f^* \T_Y$ is an isomorphism at the generic point of $X$.
In this section, we use leverage this observation to obtain lower bounds for the following separable analog of the degree of nonruledness.

\begin{definition}
  \label{def:snr}
  Let $X$ be a variety.
  The \emph{separable degree of nonruledness} $\snr(X)$ is the smallest natural number $d$ such that there exists a separable dominant rational map of degree $d$ from $X$ to a ruled variety.
\end{definition}

\begin{remark}
  \label{rmk:deg-insep}
  The degree of an inseparable map is divisible by the characteristic.
  Therefore, if $X$ is a variety over a field of positive characteristic $p$, then
  \begin{equation*}
    \nr(X) \ge \min(\snr(X), p).
  \end{equation*}
\end{remark}

In Propositions \ref{prop:snr-top-forms} and \ref{prop:obstruction-to-ruledness} we assume that a variety $X$ has ``sufficiently many'' global differential forms.
To formulate this precisely, we make the following definition.

\begin{definition}
  \label{def:separates-pts}
  Let $X$ be a variety, let $M$ be an $\OO_X$-module, let $W \subseteq \Gamma(X, M)$ be a linear subspace, and let $n$ be a positive integer.
  We say that $W$ \emph{separates $n$ general points} if there exists a dense open subset $U\subseteq X$ with the following property: given distinct closed points $x_1,\dotsc, x_n\in U$, there exists a section $s\in W$ such that $s(x_i)=0\in M(x_i)$ for all $i=1,\dotsc, n-1$, but $s(x_n)\ne 0 \in M(x_n)$.
\end{definition}

\begin{remark}
  \label{rmk:sep-pts}
  Let $L$ be an invertible sheaf on a variety $X$.
  Let $W\subseteq \Gamma(X, L)$ be a linear subspace.
  \begin{itemize}
    \item If $W\ne 0$, then the rational map $X\dashrightarrow \PP W$ induced by $L$ is generically injective if, and only if, $W$ separates $2$ general points.
    \item $L$ is big if, and only if, there exists $m>0$ such that $\Gamma(X, L^{\otimes m})$ separates $2$ general points.
    \item If $L$ is very ample, then $\Gamma(X, L^{\otimes m})$ separates $m+1$ general points for each $m \ge 0$.
  \end{itemize}
\end{remark}

\begin{proposition}
  \label{prop:snr-top-forms}
  Let $X$ be a smooth proper variety.
  Let $n$ be a positive integer.
  If $\Gamma(X, \omega_X)$ separates $n$ general points, then $\snr(X) > n$.
\end{proposition}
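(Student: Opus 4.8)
The plan is to argue by contradiction. Suppose $\snr(X)\le n$, so there is a separable dominant rational map $f:X\dashrightarrow Y$ of degree $d\le n$ with $Y$ ruled; I will produce a section of $\omega_X$ whose existence is incompatible with the hypothesis that $\Gamma(X,\omega_X)$ separates $n$ general points. The engine of the proof is a trace (pushforward) operation on top-degree forms. Since $f$ is separable and generically finite between varieties of the same dimension $m=\dim X$, its differential $df$ is an isomorphism over a dense open, so the induced map $f^*\omega_Y\to\omega_X=\Omega_X^m$ is an isomorphism over a dense open $U\subseteq X$ on which $f$ restricts to a finite étale morphism onto a smooth open $V\subseteq Y$. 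For a general point $y\in V$ the fibre $f^{-1}(y)=\{x_1,\dots,x_d\}$ consists of exactly $d$ distinct points of $U$, and the étale identifications $\omega_X|_{x_i}\cong f^*\omega_Y|_{x_i}=\omega_Y|_y$ let me form, for any $\eta\in\Gamma(X,\omega_X)$, the sum
\[
  \operatorname{Tr}(\eta)(y)\;=\;\sum_{i=1}^d \eta(x_i)\;\in\;\omega_Y|_y,
\]
which defines a regular top form $\operatorname{Tr}(\eta)$ on $V$.

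The crux is to show $\operatorname{Tr}(\eta)=0$, using that $Y$ is ruled. First I would replace $Y$ by $Z\times\PP^1$, which is legitimate because ruledness means $Y$ is birational to such a product and the trace is insensitive to this modification. Over a smooth proper model the vanishing $\operatorname{Tr}(\eta)=0$ is immediate, since the geometric genus $h^0(\omega)$ is a birational invariant that vanishes for $Z\times\PP^1$ by Künneth (as $H^0(\PP^1,\omega_{\PP^1})=0$). To avoid invoking resolution of singularities — unavailable in the positive characteristic setting the rest of the paper requires — I would instead fibre $X$ over $Z$. Writing $q=\pr_Z\circ f:X\dashrightarrow Z$, the map $q$ is dominant and separable with one-dimensional fibres; by generic smoothness for separable morphisms the fibre $F_z$ over a general $z\in Z$ is a smooth curve, and since $X$ is proper its smooth completion $\overline F_z$ is a smooth proper curve carrying a finite separable degree-$d$ morphism $h:\overline F_z\to\{z\}\times\PP^1\cong\PP^1$. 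Adjunction identifies $\omega_X|_{F_z}$ with $\omega_{F_z}$ up to the one-dimensional twist $\omega_Z|_z$, so $\eta$ restricts to a regular section of $\omega_{\overline F_z}$; applying the trace for the finite separable map of smooth proper curves $h$ and using $\Gamma(\PP^1,\omega_{\PP^1})=0$ yields $\sum_{x\in h^{-1}(t)}\eta(x)=0$ for all $t$. As the fibres of $h$ are exactly the fibres of $f$ over $\{z\}\times\PP^1$, this is precisely $\operatorname{Tr}(\eta)=0$.

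Granting $\operatorname{Tr}(\eta)=0$, the contradiction with separation is formal. Fix a general $y\in V$ so that $f^{-1}(y)=\{x_1,\dots,x_d\}$ lies in the open set $U$ witnessing the separation property, and enlarge it to a set of $n$ distinct points of $U$ by adjoining general auxiliary points $x_{d+1},\dots,x_n$. Ordering the tuple so that its first $n-1$ entries are $x_2,\dots,x_d$ together with the auxiliary points, and $x_1$ is placed last, the hypothesis furnishes $\eta\in\Gamma(X,\omega_X)$ with $\eta(x_i)=0$ for $i=2,\dots,d$ and $\eta(x_1)\ne 0$. Then $\operatorname{Tr}(\eta)(y)=\sum_i\eta(x_i)=\eta(x_1)\ne 0$, contradicting $\operatorname{Tr}(\eta)=0$. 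Hence no such $f$ exists and $\snr(X)>n$.

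The main obstacle is the middle step: verifying $\operatorname{Tr}(\eta)=0$ cleanly in arbitrary characteristic. The delicate points are ensuring that $q$ is genuinely separable with smooth general fibres (so that generic smoothness applies and $\overline F_z$ is honestly smooth), checking that the regular form $\eta$ restricts without poles to the completed fibre $\overline F_z$ after normalization, and confirming that the curve-trace there agrees with the fibrewise sum $\sum_i\eta(x_i)$ under the adjunction and étale identifications. Once the trace is set up compatibly, the reduction to $Z\times\PP^1$, the degree count of the fibre, and the separation argument are all routine.
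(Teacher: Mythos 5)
Your overall template (trace the top form along the separable map, show the trace vanishes because the target is ruled, contradict separation of points) matches the paper's, but the route you propose for the key vanishing $\operatorname{Tr}(\eta)=0$ has a genuine gap, and the two steps you flag at the end as ``delicate'' are not mere compatibilities to be checked: they are \emph{false} in positive characteristic, which is precisely the setting this proposition must cover. First, generic smoothness fails for separable fibrations in characteristic $p$: the general fibre of $q=\pr_Z\circ f$ can have \emph{moving singularities}. Second, and fatally, a global top form need not restrict to a regular form on the normalization of the completed fibre. A concrete example (with $p>2$, $l=(p-1)/2$): for the separable fibration $b\colon \A^2\to\A^1$, $b(x,y)=x+x^2y^p$, one has $db=(1+2xy^p)\,dx$, every fibre $F_c$ ($c\neq 0$) is singular at the unique point where $1+2xy^p=0$, and the relative form attached to $\eta=dx\wedge dy$, namely $dy/(1+2xy^p)$, pulls back on the normalization of $F_c$ to $2(4c)^{l-1}s^{1-p}\,ds$ in a local parameter $s$ --- a pole of order $p-1$ over the moving singularity. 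In your setting the same phenomenon can occur at the points of $\overline F_z$ lying over the non-\'etale locus of $f$ (that locus can dominate $Z$, so these points exist for \emph{every} $z$; your correct observation that $q$ is smooth wherever $f$ is \'etale only protects the points over $V$). Consequently your curve-level trace is a priori only a \emph{rational} section of $\omega_{\PP^1}$, regular on a cofinite open set but with possible poles at the images of the bad points; since $\Gamma(\PP^1,\omega_{\PP^1}(D))\neq 0$ once $\deg D\ge 2$, the concluding contradiction with $\Gamma(\PP^1,\omega_{\PP^1})=0$ evaporates.

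The missing ingredient is exactly what the paper imports from de Jong--Starr: for the proper, separable, generically finite morphism $p\colon R\to Y\times\PP^1$ from the (possibly singular) closure $R$ of the graph of $f$ to the smooth target $Y\times\PP^1$, there is a trace map $p_*\Omega^N_R\to\Omega^N_{Y\times\PP^1}$ defined on \emph{all} of $Y\times\PP^1$ that agrees with the sum over fibres on the \'etale locus. The globality of that trace is precisely the assertion that no poles appear; it is a theorem, not a formality, and once granted the paper concludes immediately from $\Gamma(Y\times\PP^1,\omega_{Y\times\PP^1})=0$ (K\"unneth). Your fallback --- resolve and use birational invariance of $h^0(\omega)$ --- does repair the argument in characteristic zero, as you note, but not in characteristic $p$. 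So either invoke the de Jong--Starr trace as the paper does, or supply a genuine argument controlling the restricted form at the non-\'etale points of the fibres; as written, the proof does not go through.
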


A similar result holds for the degree of irrationality in characteristic zero; see \cite[Theorem 1.10]{BdPELU17}.

\begin{proof}
  We mimic the argument of \cite[Lemma 2.3]{CS20}.
  Aiming for a contradiction, suppose that $\Gamma(X, \omega_X)$ separates $n$ general points, and that there exists a variety $Y$ and a separable dominant rational map $f : X\dashrightarrow Y\times \PP^1$ of degree $\le n$.
  After shrinking $Y$, we may assume that $Y$ is smooth.
  Let $R \subseteq X\times Y\times \PP^1$ be the closure of the graph of $f$.
  We have a commutative diagram
  \begin{equation*}
    \begin{tikzcd}
      R \ar[rd, "p"] \ar[d, "q"'] \\
      X \ar[r, dashed, "f"'] & Y \times \PP^1
    \end{tikzcd}
  \end{equation*}
  where $q$ is birational and $p$ is proper, separable and of degree $\le n$.

  Let $N := \dim X$ and let
  \begin{equation*}
    \operatorname{Tr}_p^N : p_* \Omega_R^N \to \Omega_{Y\times\PP^1}^N,
  \end{equation*}
  be the $\OO_{Y\times\PP^1}$-linear \emph{trace map} of \cite[Proposition 3.3]{dJS2004} and \cite{dJS2008}.
  Let $V\subseteq Y\times \PP^1$ be an open subset over which $p$ is finite and \'etale.
  Over $V$, the trace map $\operatorname{Tr}_p^N$ is given by a sum over fibers.
  Let $U\subseteq X$ be an open subset as in Definition \ref{def:separates-pts} applied to $\Gamma(X, \omega_X)$ and over which $q$ is an isomorphism.
  After shrinking $V$, we may assume $p^{-1}V \subseteq q^{-1}U$.

  Let $v\in V$ be a closed point.
  The fiber $p^{-1}\{v\} \subseteq R$ consists of points $z_1,\dotsc,z_d$, whose images under $q$ are distinct and contained in $U$.
  Because $\deg(p) \le n$, there exists a section $s\in \Gamma(X, \omega_X)$ that vanishes at $q(z_1), \dotsc, q(z_{d-1})$, but not at $q(z_d)$.
  For such a section,
  \begin{equation*}
    \operatorname{Tr}_p^N(q^* s) \in \Gamma(Y\times \PP^1, \omega_{Y\times \PP^1})
  \end{equation*}
  is nonzero at $v$.
  This contradicts the fact that
  \begin{equation*}
    \Gamma(Y\times \PP^1, \omega_{Y\times \PP^1}) =
    \Gamma(Y, \omega_Y)\otimes_k \Gamma(\PP^1_k, \omega_{\PP^1}) = 0,
  \end{equation*}
  as follows from the Künneth formula and the isomorphism $\omega_{\PP^1} \cong \OO_{\PP^1}(-2)$.
\end{proof}

A direct application of Proposition \ref{prop:snr-top-forms} to complete intersections yields the following:

\begin{corollary}
  \label{cor:no-degeneration}
  Let $X\subseteq \PP^N$ be a smooth complete intersection of positive dimension and multi-degree $(d_1,\dotsc, d_c)$ over an algebraically closed field, and let
  \begin{equation*}
    \delta := \sum_{i=1}^c d_i - N - 1.
  \end{equation*}
  If $\delta \ge 0$, then $\snr(X) > \delta+1$.
\end{corollary}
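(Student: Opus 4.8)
The plan is to deduce this from Proposition \ref{prop:snr-top-forms} applied with $n = \delta + 1$. Since $\delta \ge 0$, the integer $n = \delta+1$ is positive, and a smooth complete intersection of positive dimension is a smooth proper variety; so the hypotheses of that proposition are in force once we check that $\Gamma(X, \omega_X)$ separates $\delta+1$ general points.

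First I would identify the canonical sheaf. By iterated adjunction for the complete intersection $X \subseteq \PP^N$ of multi-degree $(d_1,\dotsc,d_c)$, together with $\omega_{\PP^N} \cong \OO_{\PP^N}(-N-1)$, one obtains
\begin{equation*}
  \omega_X \cong \OO_{\PP^N}\bigl(\textstyle\sum_i d_i - N - 1\bigr)\big|_X \cong \OO_X(\delta).
\end{equation*}
This reduces the problem to a statement about powers of $\OO_X(1)$.

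Next I would use that $\OO_X(1)$, being the restriction to the closed subvariety $X$ of the very ample sheaf $\OO_{\PP^N}(1)$, is itself very ample. The third bullet of Remark \ref{rmk:sep-pts} then shows that $\Gamma(X, \OO_X(m))$ separates $m+1$ general points for every $m \ge 0$. Specializing to $m = \delta$ and invoking the isomorphism $\omega_X \cong \OO_X(\delta)$, we conclude that $\Gamma(X, \omega_X)$ separates $\delta+1$ general points, whence $\snr(X) > \delta+1$ by Proposition \ref{prop:snr-top-forms}.

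There is no substantial obstacle here: the corollary is a direct specialization of the proposition, and the only points demanding attention are the routine adjunction computation and the bookkeeping needed to match the separation hypothesis of Definition \ref{def:separates-pts} (vanishing at the first $\delta$ of the chosen points, nonvanishing at the last).
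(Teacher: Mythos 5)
Your proposal is correct and follows the paper's own proof exactly: identify $\omega_X \cong \OO_X(\delta)$, apply the third bullet of Remark \ref{rmk:sep-pts} to see that $\Gamma(X,\omega_X)$ separates $\delta+1$ general points, and conclude via Proposition \ref{prop:snr-top-forms}. The extra details you supply (the adjunction computation and the very ampleness of $\OO_X(1)$) are precisely what the paper leaves implicit.
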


\begin{proof}
  The canonical sheaf $\omega_X$ is naturally isomorphic to $\OO_X(\delta)$.
  If $\delta\ge 0$, then $\Gamma(X, \omega_X)$ separates $\delta+1$ general points (Remark \ref{rmk:sep-pts}).
  Therefore, the result follows from Proposition \ref{prop:snr-top-forms}.
\end{proof}

\begin{remark}
  \label{rmk:classical-irrationality}
  If $\delta \ge 0$, then $\Gamma(X, \omega_X) \neq 0$, so $X$ is not separably uniruled.
\end{remark}

The next proposition is a minor extension of a result of Chen--Stapleton \cite[Lemma 2.3]{CS20}.

\begin{proposition}[Chen--Stapleton]
  \label{prop:obstruction-to-ruledness}
  Let $X$ be a proper variety.
  Let $Q$ be an invertible sheaf on $X$.
  Suppose that, for some integer $i>0$, there exists an injection of $\OO_X$-modules
  \begin{equation*}
    Q \hookrightarrow \Omega_X^i/(\Omega_X^i)_\tors.
  \end{equation*}
  Let $n$ be a positive integer.
  If $\Gamma(X, Q)$ separates $n$ general points, then $\snr(X) > \lfloor n/2\rfloor$.
\end{proposition}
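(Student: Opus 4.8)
The plan is to follow the strategy of Proposition \ref{prop:snr-top-forms} and of \cite[Lemma 2.3]{CS20}, the new feature being that an intermediate form, unlike a top form, has a trace that need not land in a sheaf without global sections. I would argue by contradiction: assume $\Gamma(X,Q)$ separates $n$ general points while there is a separable, dominant, generically finite rational map $f\colon X\dashrightarrow Y\times\PP^1$ of degree $d\le\lfloor n/2\rfloor$, so that $2d\le n$. Writing $N:=\dim X$, I would resolve $f$ by a variety $R$ equipped with a birational morphism $q\colon R\to X$ and a proper separable morphism $p\colon R\to Y\times\PP^1$ of degree $d$, with $Y$ smooth of dimension $N-1$, and I would use the de Jong--Starr trace $\operatorname{Tr}_p^i\colon p_*\Omega^i_R\to\Omega^i_{Y\times\PP^1}$, which over the \'etale locus of $p$ is the sum over the fibre. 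Reading a section $s\in\Gamma(X,Q)$ as a rational $i$-form on $X$ through $Q\hookrightarrow\Omega^i_X/(\Omega^i_X)_\tors$, I would set $\sigma_s:=\operatorname{Tr}_p^i(q^*s)$.

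The role of the ruling I would capture through the decomposition $\Omega^i_{Y\times\PP^1}\cong\pi_Y^*\Omega^i_Y\oplus\bigl(\pi_Y^*\Omega^{i-1}_Y\otimes\pi_{\PP^1}^*\omega_{\PP^1}\bigr)$, calling the two summands horizontal and vertical. Restricting the vertical summand to a fibre $\Phi=\{y\}\times\PP^1$ gives a sum of copies of $\omega_{\PP^1}\cong\OO_{\PP^1}(-2)$, which has no nonzero global sections. For $y$ general the curve $q^{-1}\Phi\subseteq R$ maps into the smooth locus of $X$, so $q^*s$ is regular along it and $\sigma_s|_\Phi$ is a regular $i$-form; its vertical part is then a global section of $\omega_\Phi^{\oplus r}$, hence zero. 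Expressing $\sigma_s|_\Phi$ over the \'etale locus as the fibrewise sum, I would record, for general $t\in\Phi$ with $p$-fibre $\{z_1,\dots,z_d\}$ and $q$-images $x_1,\dots,x_d$, the two identities
\begin{equation*}
  \sum_{j=1}^d \iota_{\xi_j}\,s(x_j)=0,\qquad
  \sum_{j=1}^d s(x_j)\big|_{\wedge^i H_j}=c\in\Omega^i_Y(y),
\end{equation*}
where, via the isomorphism $df_{x_j}\colon\T_{X,x_j}\xrightarrow{\sim}\T_{Y,y}\oplus\T_{\PP^1,t}$ guaranteed by separability, $H_j\subseteq\T_{X,x_j}$ is the preimage of $\T_{Y,y}$ and $\xi_j\ne 0$ the preimage of a generator of $\T_{\PP^1,t}$; the constant $c$ is independent of $t$ because the horizontal part of $\sigma_s|_\Phi$ is a section of a trivial bundle on $\Phi$.

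To conclude I would evaluate these identities at two general points $t_0,t_1\in\Phi$, whose combined fibres give $2d$ distinct general points of $X$. Since $2d\le n$, the separation hypothesis provides a section $s$ vanishing at the $d$ points over $t_0$ and at all but one point $x$ over $t_1$, with $s(x)\ne 0$. The horizontal identity at $t_0$ forces $c=0$, so the horizontal identity at $t_1$ gives $s(x)|_{\wedge^i H}=0$, while the vertical identity at $t_1$ gives $\iota_{\xi}\,s(x)=0$ for the nonzero vector $\xi$ complementary to $H=\ker\bigl(\T_{X,x}\to\T_{\PP^1,t_1}\bigr)$. Decomposing $s(x)$ along $\T_{X,x}=H\oplus\langle\xi\rangle$, these two vanishings force $s(x)=0$, contradicting the choice of $s$.

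The main obstacle is exactly the bookkeeping that produces the factor $\tfrac12$: the contradiction needs a point where a section of $Q$ is nonzero yet is annihilated both by restriction to a horizontal hyperplane and by contraction with the complementary vertical direction, and pinning down both conditions at once costs the two fibres $t_0,t_1$, hence $2d$ points rather than $d$. The remaining technical care, for a merely proper and possibly singular $X$, is to ensure that $q^*s$ is regular along the general fibre $q^{-1}\Phi$ so that the vertical part of $\sigma_s|_\Phi$ genuinely vanishes; I would secure this by restricting the construction to the smooth locus of $X$ and choosing $y$ so that $q^{-1}\Phi$ avoids the indeterminacy and singular loci, and by checking that separability makes each $\xi_j$ nonzero at the general points in play.
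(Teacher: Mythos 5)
Your core argument---resolving $f$, using the de Jong--Starr trace, decomposing $\Omega^i_{Y\times\PP^1}$ into horizontal and vertical parts, and spending $2d\le n$ points across two fibres $t_0,t_1$ of the same $\PP^1$---is exactly the Chen--Stapleton argument that the paper invokes (the paper's proof is essentially a citation of \cite[Lemma 2.3]{CS20} plus one remark about torsion), and your contraction/restriction bookkeeping at $t_1$ is a correct, if more explicit, way of saying that the trace at $t_1$ reduces to the single nonzero term $((dp_{z})^{-1})^*s(x)\ne 0$.

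There is, however, a genuine gap, and it sits precisely at the point that the paper's proof is actually about. You read $s$ as a \emph{rational} $i$-form, so $\sigma_s=\operatorname{Tr}_p^i(q^*s)$ is a priori only a rational section of $\Omega^i_{Y\times\PP^1}$; but the two facts you need---the vertical part of $\sigma_s|_\Phi$ vanishes, the horizontal part is constant---require $\sigma_s$ to be regular along the \emph{entire} fibre $\Phi=\{y\}\times\PP^1$ (a section of $\OO_{\PP^1}(-2)$ that is regular away from finitely many points need not vanish, and a rational section of $\OO_{\PP^1}$ need not be constant). Your proposed fix, choosing $y$ so that $p^{-1}\Phi$ avoids the indeterminacy and singular loci, cannot work: the bad locus $B\subseteq R$ (the exceptional locus of $q$ together with $q^{-1}(\operatorname{Sing} X)$) has dimension $\le N-1$, but its image $p(B)$ is in general a divisor in $Y\times\PP^1$ dominating $Y$, so \emph{every} fibre $\{y\}\times\PP^1$ meets it; genericity of $y$ buys nothing. (Restricting to the smooth locus of $X$ instead destroys the properness of $p$ that the de Jong--Starr trace requires.) The correct repair is the one observation the paper makes: because $Q\hookrightarrow \Omega^i_X/(\Omega^i_X)_\tors$, the section $s$ is a genuine global section of $\Omega^i_X/(\Omega^i_X)_\tors$ on all of the proper variety $X$; its pullback is a global section of $\Omega^i_R/(\Omega^i_R)_\tors$ (torsion pulls back to torsion under the birational map $q$), and the trace map factors through $p_*\bigl(\Omega^i_R/(\Omega^i_R)_\tors\bigr)$. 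Hence $\sigma_s$ is a regular section of $\Omega^i_{Y\times\PP^1}$ everywhere, its restriction to every fibre is regular, and no avoidance argument is needed. With that substitution your argument closes and coincides with the paper's.
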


\begin{proof}
  The result reduces to \cite[Lemma 2.3]{CS20} when $\Omega_X^i$ is torsion-free.
  The proof given in loc.\ cit.\ works with minor modifications in the case where $\Omega_X^i$ has torsion, because for every separable, generically finite, dominant, proper morphism of varieties $p : R\to Y\times \PP^1$ with $Y$ smooth, the trace map
  \begin{equation*}
    \operatorname{Tr}_p^i : p_* \Omega_R^i \to \Omega_{Y\times\PP^1}^i
  \end{equation*}
  of \cite[Proposition 3.3]{dJS2004} factors through $p_* (\Omega_R^i/(\Omega_R^i)_\tors)$.
\end{proof}

\subsection{Matsusaka's theorem}

\label{subsec:matsusaka}

Matsusaka's theorem says that, given a family of varieties over a discrete valuation ring, if the generic fiber is ruled, then so is the special fiber \cite[p. 233]{Matsusaka68}.
Chen--Stapleton extended Matsusaka's theorem by showing that the degree of nonruledness is lower semi-continuous \cite[Theorem 1.1]{CS20}.

\begin{theorem}[Matsusaka, Chen--Stapleton]
  \label{thm:gen-matsusaka}
  Let $f : X\to S$ be a flat, finite-type morphism between excellent schemes.
  Let $\eta, s\in S$ be points such that $s$ is in the closure of $\eta$.
  Choose algebraic closures $\overline{k(\eta)}$ and $\overline{k(s)}$ of the residue fields of $\eta$ and $s$, and let $X_{\bar \eta}$ and $X_{\bar s}$ be the corresponding geometric fibers.
  Suppose that $X_{\bar \eta}$ is integral, and let $X'_{\bar s}\subseteq X_{\bar s}$ be an irreducible component at the generic point of which $X_{\bar s}$ is reduced.
  \begin{enumerate}
    \item If $X_{\bar \eta}$ is ruled, then so is $X_{\bar s}'$.
    \item If $X_{\bar \eta}$ admits a dominant rational map of degree $\le d$ to a ruled variety, then the same holds for $X'_{\bar s}$.
          In other words,
          \begin{equation*}
            \nr(X'_{\bar s})\le \nr(X_{\bar \eta}).
          \end{equation*}
  \end{enumerate}
\end{theorem}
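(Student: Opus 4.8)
The plan is to deduce part (1) from part (2) and to prove (2) by reducing to the setting of Chen--Stapleton \cite[Theorem 1.1]{CS20} over a discrete valuation ring. First, part (1) is the case $d=1$ of part (2): if $X_{\bar\eta}$ is ruled then $\nr(X_{\bar\eta})=1$, so (2) produces a dominant rational map of degree $\le 1$ from $X'_{\bar s}$ to a ruled variety, which must be birational; hence $X'_{\bar s}$ is ruled. So it suffices to treat (2), and I will bound $\nr(X'_{\bar s})$ by $d:=\nr(X_{\bar\eta})$.

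The first reduction is to replace the base by a trait. Since $s$ lies in the closure of $\eta$ and $S$ is excellent, I would choose a discrete valuation ring $R$ together with a morphism $\Spec R\to S$ sending the generic point to $\eta$ and the closed point to $s$; such a trait is obtained by passing to the reduced closure of $\{\eta\}$, normalizing, and localizing at a suitable codimension-one point after a blow-up centered at $s$. Writing $K=\Frac(R)$ and $\kappa$ for the residue field, base change gives a flat finite-type family $X_R\to\Spec R$ whose generic fiber is $X_\eta\otimes_{k(\eta)}K$ and whose special fiber is $X_s\otimes_{k(s)}\kappa$. Because $\nr$ is invariant under extension of algebraically closed ground fields (Definition \ref{def:universal-domain}), comparing the geometric fibers of $X_R\to\Spec R$ over $\bar K$ and $\bar\kappa$ with the given geometric fibers over $\overline{k(\eta)}$ and $\overline{k(s)}$ shows it is enough to prove the statement for $X_R\to\Spec R$. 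In particular I may assume $S=\Spec R$, that $\eta$ and $s$ are the generic and closed points, and that $X_{\bar\eta}$ is the integral geometric generic fiber.

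The heart of the argument is the specialization of a ruling, which is the content of Matsusaka's theorem \cite{Matsusaka68} and its refinement by Chen--Stapleton \cite[Theorem 1.1]{CS20}. Concretely, a dominant rational map $X_{\bar\eta}\dashrightarrow B\times\PP^1$ of degree $d$ yields a correspondence $\Gamma\subseteq X_{\bar\eta}\times(B\times\PP^1)$, birational onto $X_{\bar\eta}$ and of degree $d$ onto $B\times\PP^1$, together with the covering family of rational curves cut out by the fibers of $B\times\PP^1\to B$. After a finite extension of $R$ I would descend $B$, this curve family, and $\Gamma$ to models over $R$ and then specialize: the relative Chow variety (equivalently, Hilbert scheme) of $X_R/\Spec R$ is proper, so the covering family of rational curves on the generic fiber extends to a family over $R$ whose special member still sweeps out $X_{\bar s}$, and $\Gamma$ specializes to a correspondence $\Gamma_{\bar s}$ between $X_{\bar s}$ and the ruled variety $B_{\bar\kappa}\times\PP^1$.

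The extension over \cite{CS20} lies in extracting the conclusion on a single, possibly non-reduced, component. I would restrict to the chosen component $X'_{\bar s}$, at whose generic point $\xi'$ the family is reduced, and single out the unique component $\Gamma'$ of $\Gamma_{\bar s}$ dominating $X'_{\bar s}$. Here generic reducedness is decisive: the local ring of $X_R$ at $\xi'$ is then a discrete valuation ring, so the proper birational projection $\Gamma\to X_R$ is an isomorphism near $\xi'$ by the valuative criterion, forcing $\Gamma'\to X'_{\bar s}$ to be birational; consequently $\Gamma'$ defines an honest dominant rational map from $X'_{\bar s}$ to its image in $B_{\bar\kappa}\times\PP^1$, whose degree does not exceed $d$ by conservation of number. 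The main obstacle, and the step I expect to demand the most care, is to guarantee that this image is genuinely ruled and of full dimension---that the $\PP^1$-direction survives the degeneration and the map stays generically finite---so that $\Gamma'$ realizes $\nr(X'_{\bar s})\le d$; this is precisely where I would invoke the rational-curve machinery underlying Matsusaka's theorem to keep the specialized covering family rational and sweeping out $X'_{\bar s}$ with a single curve through a general point.
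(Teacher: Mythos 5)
Your reduction steps coincide with the paper's: part (1) is the case $d=1$ of part (2); the base is replaced by an excellent DVR obtained by blowing up the reduced closure of $\{\eta\}$ along the closure of $s$, normalizing, and localizing at a codimension-one point over $s$; and invariance of $\nr$ under extension of algebraically closed ground fields (this is Remark \ref{rmk:ruled-geometric}, i.e.\ \cite[Lemma 1.2]{CS20}, not Definition \ref{def:universal-domain}) lets one compare the prescribed geometric fibers with those of the pulled-back family. You also isolate the correct local fact for the component: since $X_s$ is a Cartier divisor in $X$ by flatness and is reduced at the generic point $x'$ of $X'_{\bar s}$, the local ring $\OO_{X,x'}$ is a DVR. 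The paper uses exactly this observation, but in a cleaner way: it shrinks $X$ to a normal, irreducible open neighborhood of $x'$ \emph{before} doing anything else, so that the whole situation becomes the one treated by Chen--Stapleton.

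The gap is in what you do next. Where the paper, having arranged $S=\Spec R$ with $X$ normal and irreducible, simply invokes \cite[Theorem 1.1]{CS20} as a black box (noting that its proof goes through for nonprojective finite-type morphisms), you instead attempt to re-derive that theorem: descend the graph $\Gamma$ and the ruling to a model over $R$, specialize via properness of the relative Chow variety or Hilbert scheme, and restrict to the component. This sketch does not close. You yourself flag that ``the main obstacle'' is to show that the specialized correspondence still maps dominantly, generically finitely, and with degree $\le d$ onto something genuinely ruled---but that is not a routine detail to be deferred; it is the entire content of Matsusaka's theorem and of \cite[Theorem 1.1]{CS20}. Limits of graphs of rational maps can degenerate badly (acquire extra components, fail to dominate, lose the $\PP^1$-direction), and no argument is offered to rule this out; gesturing at ``the rational-curve machinery underlying Matsusaka's theorem'' leaves precisely the hard step unproven. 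There is also a second, structural problem: your specialization relies on properness of Chow/Hilbert schemes of $X_R/\Spec R$, which presupposes that $f$ is proper (indeed projective), whereas the theorem assumes only a flat finite-type morphism; the paper is careful on exactly this point, remarking that \cite[Theorem 1.1]{CS20} holds for nonprojective finite-type morphisms with the same proof. The repair is straightforward: keep your reductions to the DVR case, use the Cartier-divisor/DVR observation to shrink $X$ to a normal irreducible neighborhood of $x'$, and then cite \cite[Theorem 1.1]{CS20} outright instead of re-proving it.
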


\begin{proof}
  Part (1) follows from part (2).
  The result of Chen--Stapleton \cite[Theorem 1.1]{CS20} establishes part (2) in the case where $S$ is the spectrum of an excellent discrete valuation ring and $X$ is normal and irreducible.
  Indeed, \cite[Theorem 1.1]{CS20} holds
  for nonprojective finite-type morphisms, with the same proof.
  Let us reduce part (2) to the case considered by Chen--Stapleton.

  Let $S' \subseteq S$ be the closure of $\eta$ equipped with the reduced scheme structure, and let $R$ be an excellent DVR with fraction field $k(\eta)$ that dominates the local ring $\OO_{S', s}$.
  For example, letting $B$ denote the normalization of the blowup of $S'$ along the closure of $s$, we can take $R$ to be the local ring of any codimension $1$ point of $B$ that lies over $s$.

  Let $K$ be an algebraic closure of the residue field of $R$.
  By Remark \ref{rmk:ruled-geometric} below applied to $X'_{\bar s}$ and a $k(s)$-algebra embedding of $\overline{k(s)}$ into $K$, it suffices to prove the result for the second projection $X\times_S \Spec R\to \Spec R$.
  Thus we may assume that $S$ is the spectrum of an excellent DVR.
  We may further assume that $\eta\ne s$, so that $\eta$ (resp. $s$) is the generic (resp. closed) point of $S$.

  By flatness, the special fiber $X_s$ is a Cartier divisor in $X$.
  Its complement, the generic fiber $X_\eta$, is irreducible, so $X$ is irreducible.
  Let $x' \in X$ be the image of the generic point of $X'_{\bar s}$.
  The Cartier divisor $X_s\subseteq X$ is reduced at $x'$, so $X$ is normal at $x'$.
  Shrinking $X$ to an open neighborhood of $x'$, we may further assume that $X$ is normal.
  Doing so, we reach the case considered by Chen--Stapleton.
\end{proof}

\begin{remark}
  \label{rmk:ruled-geometric}
  Let $k\subseteq K$ be an extension of algebraically closed fields, and let $X$ be a variety over $k$.
  By \cite[Lemma 1.2]{CS20} and its proof, $X$ admits a rational map of degree $d$ to a ruled $k$-variety if, and only if, $X\times_k K$ admits a rational map of degree $d$ from to a ruled $K$-variety.
\end{remark}

Combining Theorem \ref{thm:gen-matsusaka} with the proof of \cite[Lemma 1.3]{CS20}, we obtain the next corollary.
See \cite[Theorem IV.1.8]{Kollar1996} for a related result that works over arbitrary uncountable fields.

\begin{definition}
  \label{def:universal-domain}
  A \emph{universal domain} is an algebraically closed field of infinite transcendence degree over its prime subfield.
\end{definition}

\begin{example}
  The field of complex numbers $\C$ is a universal domain.
\end{example}

\begin{corollary}
  \label{cor:very-general}
  Let $k$ be a universal domain.
  Let $S$ be a variety over $k$ and let $f : X\to S$ be a flat, finite-type morphism whose very general fiber is integral.
  Let $s, t\in S$ be closed points with $t$ very general.
  Let $X_s$ and $X_t$ be the corresponding fibers of $f$, and let $X'_s\subseteq X_s$ be an irreducible component at the generic point of which $X_s$ is reduced. If $X_t$ is ruled, then so is $X_s'$. In general,
  \begin{equation*}
    \nr(X'_s)\le \nr(X_t).
  \end{equation*}
\end{corollary}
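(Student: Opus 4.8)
The plan is to use the generic point $\eta$ of $S$ as a bridge between the very general fiber $X_t$ and the component $X'_s$, thereby reducing the statement to our Theorem \ref{thm:gen-matsusaka}; this reuses the strategy of \cite[Lemma 1.3]{CS20}, with their appeal to \cite[Theorem 1.1]{CS20} replaced by Theorem \ref{thm:gen-matsusaka} (which, unlike the original, already accommodates the component $X'_s$ of a possibly reducible or nonreduced special fiber). Since $S$ is a variety it is irreducible; let $\eta$ be its generic point. Both closed points $s$ and $t$ lie in the closure of $\eta$, which is all of $S$, and because $k$ is algebraically closed every closed point $x$ of the finite-type $k$-scheme $S$ has $k(x)=k$, so the geometric fibers $X_{\bar s}$ and $X_{\bar t}$ coincide with $X_s$ and $X_t$. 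First I would verify that the geometric generic fiber $X_{\bar\eta}$ is integral: the locus in $S$ over which the fibers are geometrically integral is constructible, and it contains a very general point by hypothesis, so it cannot lie in a proper closed subset; being constructible and not contained in a proper closed subset, it is dense and hence contains $\eta$, giving integrality of $X_{\bar\eta}$. This is exactly the hypothesis needed to invoke Theorem \ref{thm:gen-matsusaka} for specializations of $\eta$.

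With this in place, applying Theorem \ref{thm:gen-matsusaka} to the pair $(\eta, s)$ yields $\nr(X'_s)\le \nr(X_{\bar\eta})$. It therefore suffices to prove that $\nr(X_t)=\nr(X_{\bar\eta})$ for very general $t$. The inequality $\nr(X_t)\le \nr(X_{\bar\eta})$ is a second instance of Theorem \ref{thm:gen-matsusaka}, applied to $(\eta, t)$: for very general $t$ the fiber $X_t$ is integral, so $X'_t=X_t$. The reverse inequality $\nr(X_{\bar\eta})\le \nr(X_t)$ for very general $t$ is the crux, and I would prove it by spreading out. Writing $N:=\nr(X_{\bar\eta})$ and fixing $d<N$, the data of a degree-$\le d$ dominant rational map from a fiber of $f$ to a ruled variety $Y\times\PP^1$ is parametrized, after passing to closures of graphs inside relative Chow or Hilbert schemes over $S$, by countably many families of finite type. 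The image in $S$ of each such family is constructible; were one of these images dense, it would contain $\eta$, producing a degree-$\le d$ dominant rational map from $X_\eta$ to a ruled $k(\eta)$-variety, hence, after base change to $\overline{k(\eta)}$ (cf.\ Remark \ref{rmk:ruled-geometric}), the contradiction $\nr(X_{\bar\eta})\le d<N$. Thus every such image is a proper subvariety, and the locus $\{t : \nr(X_t)\le d\}$ is contained in a countable union of proper subvarieties. Letting $d$ range over $0,\dotsc,N-1$ and also discarding the locus where $X_t$ fails to be integral, I conclude $\nr(X_t)\ge N$ for very general $t$. Combining the two inequalities gives $\nr(X'_s)\le \nr(X_{\bar\eta})=\nr(X_t)$, and the ruledness assertion is the special case $\nr(X_t)=1$.

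The main obstacle is the spreading-out step: one must organize \emph{all} degree-$\le d$ rational maps to ruled varieties, across all possible targets $Y\times\PP^1$ of the correct dimension, into countably many finite-type families over $S$, so that the bad locus becomes a countable union of constructible sets whose closures are proper. This bookkeeping is the technical heart of \cite[Lemma 1.3]{CS20}, and the universal-domain hypothesis on $k$ enters precisely here, both to ensure that a very general point avoids each of the resulting countably many proper subvarieties and to provide enough transcendence degree for the relevant comparisons between fibers and the generic fiber. I expect this organizational argument, rather than any new geometric input, to be the delicate part; the two invocations of Theorem \ref{thm:gen-matsusaka} and the integrality verification are then routine.
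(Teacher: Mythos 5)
Your argument is sound in outline, but it takes a genuinely different and much heavier route than the paper. The paper's proof is essentially one line: by the proof of \cite[Lemma 2.1]{Vial2013}, the universal-domain hypothesis on $k$ and the very generality of $t$ produce an isomorphism of fields $\alpha : k \xrightarrow{\sim} \overline{k(S)}$ together with a compatible isomorphism of schemes $\beta : X_{\bar\eta} \xrightarrow{\sim} X_t$. This identification makes your crux inequality $\nr(X_{\bar\eta}) \le \nr(X_t)$ (indeed the equality $\nr(X_{\bar\eta}) = \nr(X_t)$) automatic, so the corollary follows from a single application of Theorem \ref{thm:gen-matsusaka} to the pair $(\eta, s)$, with no spreading-out at all. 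Your decomposition --- integrality of $X_{\bar\eta}$ by constructibility, Theorem \ref{thm:gen-matsusaka} applied to $(\eta,s)$ and to $(\eta,t)$, and the reverse inequality via countably many finite-type families of graphs --- is essentially the strategy of \cite[Theorem IV.1.8]{Kollar1996} and of \cite[Lemma 1.3]{CS20}, and it can be completed; but its technical heart is exactly the step you leave as a sketch: parametrizing the unbounded class of targets $Y \times \PP^1$ as the embedding dimension and degree of $Y$ vary, checking that ``graph of a dominant, generically finite map of degree $\le d$ onto a product with $\PP^1$'' is constructible in families, and handling base change at $\eta$, where a dense constructible image only yields data over a finite extension of $k(\eta)$ with $Y$ and the graph not necessarily geometrically integral (one must first cut the parameter space down to its geometrically-integral locus before passing to $\overline{k(\eta)}$ and comparing degrees). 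What the paper's route buys is brevity, since Vial's field-isomorphism trick eliminates all of this bookkeeping; what yours would buy, if written out, is the structural statement that $\{t : \nr(X_t)\le d\}$ is a countable union of constructible sets, which the paper never needs. One small caution: your claim that a constructible set containing the very general closed points must be dense uses that the closed points of a $k$-variety cannot be covered by countably many proper closed subvarieties, which requires $k$ uncountable; a universal domain may be countable, but in that case the corollary is vacuous, so nothing is lost.
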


\begin{proof}
  Let $\eta$ be the generic point of $S$.
  By \cite[Proof of Lemma 2.1]{Vial2013}, there exist an isomorphism of fields $\alpha : k\xrightarrow\sim \overline{k(S)}$ and an isomorphism of schemes $\beta : X_{\bar \eta}\xrightarrow\sim X_t$ such that the diagram
  \begin{equation*}
    \begin{tikzcd}
      X_{\bar \eta} \ar[r, "\beta", "\sim"'] \ar[d,"f"] &
      X_t \ar[d, "f"] \\
      \Spec\overline{k(S)} \ar[r, "\Spec\alpha", "\sim"'] &
      \Spec k
    \end{tikzcd}
  \end{equation*}
  commutes.
  Hence the result follows immediately from Theorem \ref{thm:gen-matsusaka}.
\end{proof}

\section{Proof of the main theorem}

\label{sec:proof of the main theorem}

In this section, we prove our main strongest result about complete intersections, namely Theorem \ref{thm:nonruledness-precise}, and use it to establish Theorem \ref{thm:invariants-intro} from the introduction.
At the heart of the proof of Theorem \ref{thm:nonruledness-precise} is a nonruledness result about certain flat limits of complete intersections, namely Theorem \ref{thm:special-fiber}.
We prove that result by combining Proposition \ref{prop:obstruction-to-ruledness} with the main results of \cite{Braune19b}.

We begin by recalling the construction of cyclic covers.

\begin{definition}
  \label{def:cyclic-covers}
  Let $X$ be a scheme and let $L$ be an invertible $\OO_X$-module.
  Let
  \begin{equation*}
    V := \Spec_X \Sym (L^\vee) \xrightarrow \pi X
  \end{equation*}
  be the vector bundle corresponding to $L$.
  Let $\tau\in \Gamma(V,\pi^* L)$ be the \emph{tautological section}, which corresponds to the identity of $V$ under the natural bijections
  \begin{equation*}
    \Hom_X(V, V) \cong \Hom_{\OO_V\text{-alg.}}(\pi^* \Sym (L^\vee), \OO_V) \cong \Gamma(V,\pi^* L).
  \end{equation*}
  Let $m$ be a positive integer and let $s\in\Gamma(X,L^{\otimes m})$ be a section.
  The \emph{degree-$m$ cyclic cover} of $X$ determined by $s$ is the closed subscheme
  \begin{equation*}
    X[\displayroot m s] \subseteq V
  \end{equation*}
  defined by the equation $\tau^{\otimes m} = \pi^*s$, equipped with its natural morphism to $X$.
\end{definition}

The following example describes cyclic covers locally.

\begin{example}
  Suppose that $L$ is free with generator $v\in \Gamma(X, L)$. 
  Thus $s = f\cdot v$ for some $f\in \Gamma(X, \OO_X)$.
  There exists a unique isomorphism of $X$-schemes $\varphi : X\times \A^1 \xrightarrow\sim V$ such that
  \begin{equation*}
    \varphi^* \tau = t \cdot \pr_1^* v
  \end{equation*}
  where $t$ is the coordinate on $\A^1$.
  The cyclic cover $X[\pthroot s]$ corresponds under this isomorphism to the subscheme of $X\times \A^1$ defined by $t^m = f$.
\end{example}

A key ingredient in Kollár's irrationality result for hypersurfaces  is Mori's degeneration of hypersurfaces of even degree $2a$ to double covers of hypersurfaces of degree $a$; see \cite[Example 4.3]{Mori1975}.
The next proposition describes the variant of Mori's degeneration that we use to establish Theorem \ref{thm:nonruledness-precise}.

\begin{setup}
  \label{setup:cover}
  Let $d_1,\dotsc, d_c$ be positive integers.
  Fix $m$ with $1\le m \le c$, and suppose that $p\ge 1$ is a common divisor of $d_1,\dotsc, d_m$.
  Let $k$ be a field, and let $X\subset \PP^N_k$ be a positive-dimensional complete intersection of multi-degree
  \begin{equation*}
    (d_1/p, \dotsc, d_m/p, d_{m+1}, \dotsc, d_c).
  \end{equation*}
  Let $f_1,\dotsc, f_m\in k[x_0,\dotsc, x_N]$ be homogeneous polynomials of respective degrees $d_1,\dotsc, d_m$.
  Finally, let $Y$ denote the fiber product
  \begin{equation*}
    Y := X[\pthroot{f_1}] \times_X \dotsb \times_X X[\pthroot{f_m}].
  \end{equation*}
\end{setup}

\begin{proposition}
  \label{prop:mori}
  Assume Setup \ref{setup:cover}.
  Let $R$ be a DVR with fraction field $K$ and residue field $k$.
  There exists a flat proper morphism
  \begin{equation*}
    \pi : Z\to \Spec R
  \end{equation*}
  whose generic fiber is isomorphic as a $K$-scheme to a complete intersection of multi-degree $(d_1,\dotsc, d_c)$ in $\PP^N_K$, and
  whose special fiber is isomorphic as a $k$-scheme to $Y$.
\end{proposition}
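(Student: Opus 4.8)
The plan is to construct $Z$ explicitly as a closed subscheme of a projective space over $R$, designed so that the extra variables implementing the cyclic covers degenerate, in the limit $t \to 0$, from a hypersurface relation to the cyclic-cover relation $\tau^{\otimes p} = \pi^* f_i$. Let me set up coordinates. Write $R$ with uniformizer $t$, so $K = \Frac(R)$ and $k = R/(t)$. The variety $X \subseteq \PP^N_k$ is cut out by polynomials $g_1, \dotsc, g_m$ of degrees $d_1/p, \dotsc, d_m/p$ together with $g_{m+1}, \dotsc, g_c$ of degrees $d_{m+1}, \dotsc, d_c$. First I would lift each $g_j$ and each $f_i$ to homogeneous polynomials over $R$ (using that $R \to k$ is surjective); abusing notation, I keep the same symbols for the lifts.

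\medskip

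Next I would introduce the ambient space. The cyclic cover $X[\pthroot{f_i}]$ lives in the total space of $\OO_X(d_i/p)$, so globally the natural home for $Y$ is a weighted projective bundle. Concretely, I would work in $\PP := \PP(1^{N+1}, d_1/p, \dotsc, d_m/p)$ over $R$, with homogeneous coordinates $x_0, \dotsc, x_N$ of weight $1$ and auxiliary coordinates $y_1, \dotsc, y_m$ with $\deg y_i = d_i/p$. Inside $\PP_R$ I would define $Z$ by the equations
\begin{equation*}
  g_{m+1} = \dotsb = g_c = 0, \qquad y_i^{\,p} - t\, h_i - f_i = 0 \quad (i = 1, \dotsc, m),
\end{equation*}
where each correction term $h_i$ is a homogeneous polynomial in $x_0, \dotsc, x_N, y_1, \dotsc, y_m$ of the appropriate weighted degree $d_i$, chosen so that the generic fiber becomes a genuine complete intersection of the desired multidegree. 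Setting $t = 0$ recovers precisely the equations $y_i^{\,p} = f_i$ defining the fiber product $Y$ over $X$, which is the required special fiber; this is the conceptual core of Mori's degeneration, replacing his double covers ($p = 2$) by general $p$-fold cyclic covers.

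\medskip

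The generic fiber over $K$ requires more care, and I expect this to be the main obstacle. Over $K$ the element $t$ is invertible, so from $y_i^{\,p} = f_i + t h_i$ one can eliminate the $y_i$ and exhibit $Z_K$ as a complete intersection of multidegree $(d_1, \dotsc, d_c)$ in an honest $\PP^N_K$. The subtle point is arranging the $h_i$ so that (a) the weighted-homogeneous equations define a subscheme of the correct codimension and dimension, flat over $R$, and (b) the change of coordinates over $K$ that clears the auxiliary variables is an isomorphism onto a standard complete intersection in $\PP^N_K$ rather than a weighted projective space. The cleanest route is to choose the $h_i$ so that the projection $Z_K \to \PP^N_K$ forgetting $y_1, \dotsc, y_m$ is an isomorphism, which amounts to solving for $y_i$ as a rational expression; concretely one wants $f_i + t h_i$ to be a $p$-th power times a unit after the substitution, or more robustly one takes $h_i$ linear in a single new variable so the equation becomes a graph. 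I would verify flatness via the dimension-and-fiber criterion: since $R$ is a DVR, $Z \to \Spec R$ is flat iff $Z$ has no components supported on the special fiber, equivalently iff the two fibers have the same dimension and $Z$ is dominant over $\Spec R$; both follow once the equations are checked to form a regular sequence. Properness is automatic because $Z$ is closed in the proper $R$-scheme $\PP_R$. The remaining bookkeeping — matching weighted degrees, confirming the special fiber is cut out exactly by the cyclic-cover equations, and identifying $Z_K$ with a standard complete intersection — is routine once the correction terms $h_i$ are pinned down, and I would relegate the explicit polynomial choices to a short verification.
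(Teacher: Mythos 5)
Your overall strategy---realizing Mori's degeneration explicitly inside the weighted projective space $\PP_R(1^{N+1}, d_1/p, \dotsc, d_m/p)$---is the right one, and it is in fact what the paper's one-line proof points to (the paper just cites \cite[Lemma 3.42]{Braune19b}, whose content is this construction). However, your list of defining equations has a genuine gap that no choice of the correction terms $h_i$ can repair. You impose only $c$ equations, namely $g_{m+1} = \dotsb = g_c = 0$ and $y_i^p - t\,h_i - f_i = 0$ for $i \le m$, in an ambient space of relative dimension $N+m$. On the special fiber, setting $t=0$ kills the terms $h_i$ no matter what they are, leaving $\{y_i^p = f_i\} \cap \{g_{m+1} = \dotsb = g_c = 0\}$: this is a fiber product of cyclic covers over the \emph{wrong} base, the codimension-$(c-m)$ complete intersection cut out by $g_{m+1},\dotsc,g_c$ alone, because the equations $g_1 = \dotsb = g_m = 0$ needed to cut out $X$ are imposed nowhere. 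It has dimension $N-c+m$, whereas $Y$ is finite over $X$ and has dimension $N-c$. On the generic fiber the same dimension count is fatal: every component of a subscheme cut out by $c$ equations in a projective ambient space of dimension $N+m$ has dimension at least $N+m-c > N-c$, so $Z_K$ can never be isomorphic to a codimension-$c$ complete intersection in $\PP^N_K$. Relatedly, your claim that over $K$ one can ``eliminate the $y_i$'' from $y_i^p = f_i + t h_i$ is false: that equation is of degree $p$ in $y_i$ (a $p$-fold cover condition, not a graph), and taking $h_i$ linear in a new variable does not change this.

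The missing idea is a second set of $m$ equations tying $y_i$ to $g_i$, namely the weighted-degree-$(d_i/p)$ equations
\begin{equation*}
  t\, y_i = g_i \qquad (i = 1, \dotsc, m),
\end{equation*}
which interpolate between a graph (for $t$ invertible) and the equation $g_i = 0$ (at $t = 0$). With $Z \subseteq \PP_R(1^{N+1}, d_1/p, \dotsc, d_m/p)$ cut out by the $c+m$ equations $y_i^p = f_i$, $t y_i = g_i$ ($i \le m$) and $g_j = 0$ ($j > m$), everything you wanted goes through: over $K$ the equations $y_i = g_i/t$ define the graph of a section $\PP^N_K \to \PP_K(1^{N+1}, d_1/p, \dotsc, d_m/p)$ (the graph misses the vertex locus $x_0 = \dotsb = x_N = 0$), and substituting turns $y_i^p = f_i$ into $g_i^p = t^p f_i$, of degree $d_i$; hence the generic fiber is the complete intersection $\{g_i^p = t^p f_i \ (i \le m),\ g_j = 0 \ (j > m)\} \subseteq \PP^N_K$ of multi-degree $(d_1, \dotsc, d_c)$. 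Setting $t=0$ gives $\{y_i^p = f_i\}$ over $\{g_1 = \dotsb = g_c = 0\} = X$, which is exactly $Y$ in the sense of Definition \ref{def:cyclic-covers}. Flatness follows because $Z$ is cut out by $c+m$ equations and has codimension exactly $c+m$ in a Cohen--Macaulay ambient scheme, hence is Cohen--Macaulay with every component of dimension $N-c+1$; no component or embedded point can then lie over the closed point of $\Spec R$, which over a DVR is equivalent to flatness. Properness is, as you say, automatic from closedness in a proper $R$-scheme.
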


\begin{proof}
  Follows immediately from \cite[Lemma 3.42]{Braune19b}.
\end{proof}

The next result establishes, under certain conditions, a lower bound for the degree of separable nonruledness of the special fiber of the degeneration of Proposition \ref{prop:mori}. It combines Proposition \ref{prop:obstruction-to-ruledness} with the main results of \cite{Braune19b}.

\begin{theorem}
  \label{thm:special-fiber}
  Assume Setup \ref{setup:cover}.
  Suppose
  \begin{itemize}
    \item the field $k$ is algebraically closed of characteristic $p$,
    \item the complete intersection $X \subseteq \PP^N_k$ is smooth over $k$, 
    \item $f_i \in k[x_0, $ $\dotsc, x_N]$ is a general member of the vector space of homogeneous polynomials of degree $d_i$ for all $i=1,\dotsc, m$.
  \end{itemize}
  Suppose furthermore
  \begin{equation}
    \label{eqn:dimensions}
    m \le \min\left(
      N-c-1,
      \tfrac 1 2 (N-c+3),
      \tfrac 1 2 (N-c-m-1)(N-c-m-1\pm 1)
    \right),
  \end{equation}
  where the symbol ``$\pm$'' should be read as ``plus'' if $p > 2$ and as ``minus'' if $p = 2$.
  Finally, suppose
  \begin{equation*}
    \delta := \frac {p+1} p \sum_{i\le m} d_i + \sum_{i>m} d_i - N -1 > 0.
  \end{equation*}
  Then $Y$ is normal and irreducible. Furthermore,
  \begin{equation*}
    \snr(Y) > \lfloor (\delta+1)/2\rfloor.
  \end{equation*}
\end{theorem}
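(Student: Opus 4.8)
The plan is to exhibit an invertible sheaf $Q$ on $Y$ that, on one hand, injects into $\Omega_Y^i/(\Omega_Y^i)_\tors$ for some $i>0$, and on the other hand has enough global sections to separate $\delta+1$ general points; the bound on $\snr(Y)$ then follows at once from Proposition \ref{prop:obstruction-to-ruledness} applied with $n=\delta+1$. Concretely, writing $\psi\colon Y\to X$ for the structure map of the fibre product of cyclic covers and $\OO_Y(1):=\psi^*\OO_X(1)$, I will take $Q=\OO_Y(\delta)$.

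First I would dispose of the normality and irreducibility of $Y$, together with the existence of the injection $\OO_Y(\delta)\hookrightarrow \Omega_Y^i/(\Omega_Y^i)_\tors$, by appealing to the main results of \cite{Braune19b}; this is the heart of the matter and the step I expect to be the main obstacle. The phenomenon is genuinely characteristic-$p$: since $t_i^p=f_i$ forces $\psi^*df_i=0$ in $\Omega_Y$, the sheaf $\Omega_Y$ acquires torsion, and the ``new'' differentials $dt_i$ carry $\OO_X(1)$-weight $d_i/p$ rather than $d_i$. A computation of $\omega_Y$ by adjunction inside the total space of $\bigoplus_{i\le m}\OO_X(d_i/p)$ gives $\omega_Y=\OO_Y\!\bigl(\delta_X+\tfrac{p-1}p\sum_{i\le m}d_i\bigr)$, where $\delta_X:=\tfrac1p\sum_{i\le m}d_i+\sum_{i>m}d_i-N-1$ is the degree of $\omega_X$; this is \emph{strictly smaller} than $\delta=\delta_X+\sum_{i\le m}d_i$. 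Since $Y$ is a Gorenstein (indeed complete intersection) normal variety, $\Omega_Y^{\dim Y}/\tors$ is contained in the line bundle $\omega_Y$, so no injection of $\OO_Y(\delta)$ into the top forms can exist; the injection must therefore land in some intermediate $\Omega_Y^i/(\Omega_Y^i)_\tors$ with $0<i<\dim Y$, where the lower-weight forms $dt_i$ permit a line subbundle of degree exceeding $\deg\omega_Y$. Producing such an injection of the precise bundle $\OO_Y(\delta)$ is exactly what \cite{Braune19b} supplies, and it is here that the inequality (\ref{eqn:dimensions}) — including the $p=2$ versus $p>2$ dichotomy encoded by the symbol ``$\pm$'' — is needed, both to bound the codimension of the singular locus of $Y$ (hence normality, via Serre's criterion and Cohen--Macaulayness) and to make the construction of the form go through. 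I would verify that the hypotheses of Setup \ref{setup:cover}, the three bounds in (\ref{eqn:dimensions}), and the assumption $\delta>0$ match those required by the relevant statements of \cite{Braune19b}.

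The genuinely new observation, which upgrades mere non-vanishing to a quantitative bound, is that $\Gamma(Y,\OO_Y(\delta))$ separates $\delta+1$ general points. Here I would use that in characteristic $p$ the morphism $\psi\colon Y\to X$ is finite and purely inseparable: each factor is defined by $t_i^p=f_i$, whose fibre over a closed point $x$ is $\Spec k[t]/\bigl((t-f_i(x)^{1/p})^p\bigr)$, a single point because $k$ is perfect. Thus $\psi$ is a universal homeomorphism, in particular a bijection on closed points, with $k(y)=k(x)=k$ whenever $y\mapsto x$. Consequently distinct closed points of $Y$ have distinct images in $X$, and for $s\in\Gamma(X,\OO_X(\delta))$ one has $(\psi^*s)(y)=0$ if and only if $s(\psi(y))=0$. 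Since $\OO_X(1)$ is very ample and $\delta\ge 0$, Remark \ref{rmk:sep-pts} furnishes a dense open $U'\subseteq X$ witnessing that $\Gamma(X,\OO_X(\delta))$ separates $\delta+1$ general points; pulling back along the homeomorphism $\psi$, the dense open $U:=\psi^{-1}(U')$ and the sections $\psi^*s$ witness the same property for $\Gamma(Y,\OO_Y(\delta))$ in the sense of Definition \ref{def:separates-pts}.

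Finally I would assemble the pieces. By the first step $Y$ is a normal, irreducible, projective (hence proper) variety over $k$, so Proposition \ref{prop:obstruction-to-ruledness} applies to $Q=\OO_Y(\delta)$ with the index $i>0$ produced above; taking $n=\delta+1$ yields $\snr(Y)>\lfloor(\delta+1)/2\rfloor$, as claimed.
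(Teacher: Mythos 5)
Your proposal follows essentially the same route as the paper: both obtain normality, irreducibility, and an injection of the pullback of $\OO_X(\delta)$ into intermediate-degree differential forms modulo torsion from the cited results of \cite{Braune19b} (the paper checks the hypotheses there via the critical loci $\Sigma^i(s)$, $\Sigma^{i,j}(s)$ and condition (\ref{eqn:dimensions})), then transfer the point-separation property of $\Gamma(X,\OO_X(\delta))$ through the purely inseparable cover, and conclude via Proposition \ref{prop:obstruction-to-ruledness} with $n=\delta+1$. The only wrinkle is that \cite{Braune19b} actually supplies the injection $\rho^*\mu^*\OO_X(\delta)\hookrightarrow \Omega_B^{N-c-m}/(\Omega_B^{N-c-m})_\tors$ on a proper birational modification $\rho\colon B\to Y$ rather than on $Y$ itself, which is harmless: one applies the proposition on $B$ and uses $\snr(Y)=\snr(B)$, exactly as the paper does.
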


\begin{proof}
  Let $\Frob_X : X \to X$ be the absolute Frobenius morphism, which acts on the topological space of $X$ as the identity and whose comorphism $\Frob_X^\# : \OO_X\to \OO_X$ sends $f\mapsto f^p$.
  
  Let $E := \oplus_{i=1}^m \OO_X(d_i/p)$.
  Then $\Frob_X^* E = \oplus_{i=1}^m \OO_X(d_i)$.
  Let $s := (f_1,\dotsc, f_m)|_X \in \Gamma(X, \Frob_X^* E)$.
  Applying \cite[Construction 3.11]{Braune19b} to $s$, we obtain a morphism of schemes $\mu : X[\pthroot s]\to X$. This construction recovers $Y$ in the sense that there exists a canonical isomorphism of $X$-schemes
  \begin{equation*}
    Y \cong X[\pthroot s].
  \end{equation*}

  The Frobenius pullback $\Frob_X^* E$ carries a canonical connection \cite[Definition 3.9]{Braune19b}.
  Given a nonnegative integer $i$, one can use this connection to define \emph{$i$th critical locus of $s$}, which is a closed subscheme $\Sigma^i(s)\subseteq X$ \cite[Definition 1.30]{Braune19b}.
  Proceding by analogy with \cite{Porteous71}, given nonnegative integers $i$ and $j$, one can similarly define the \emph{locus of second-order singularities with symbol $(i,j)$ of $s$}, which is a locally closed subscheme $\Sigma^{i,j}(s)\subseteq X$ \cite[Definition 1.33]{Braune19b}.

  Let $W$ be the image of the $k$-linear restriction map
  \begin{equation*}
    \Gamma(\PP^N_k, \oplus_{i=1}^m \OO(d_i)) \rightarrow \Gamma(X, \Frob_X^* E).
  \end{equation*}
  The section $s\in \Gamma(X, \Frob_X^* E)$ is a general element of $W$.
  We have $d_i\ge 2$ for all $i=1,\dotsc, m$, so the natural $k$-linear map
  \begin{equation*}
    W\to \Frob_X^* E \otimes \OO_X/\mathfrak m_x^3
  \end{equation*}
  is surjective for all closed points $x\in X$.
  Applying \cite[Corollaries 2.36 and 2.59]{Braune19b}, we obtain formulas for the codimensions of $\Sigma^i(s)$ and $\Sigma^{i,j}(s)$ in $X$.
  
  Comparing these formulas with (\ref{eqn:dimensions}), we conclude
  \begin{enumerate}
    \item $\codim_X \Sigma^1(s) \ge 2$,
    \item $\Sigma^2(s)$ is empty, and
    \item $\Sigma^{1, N-c-m-1}(s)$ is empty.
  \end{enumerate}
  Thus the hypotheses \cite[Propositions 3.20 and 3.29]{Braune19b} are satisfied.
  According to these results, $Y= X[\pthroot s]$ is integral and normal. Moreover, letting $Q := \omega_X \otimes \det(E)^{\otimes p}$, there exist a proper birational map $\rho : B\to Y$ and an injection
  \begin{equation*}
    \rho^*\mu^* Q\hookrightarrow \Omega_B^{N-c-m}/(\Omega_B^{N-c-m})_\tors.
  \end{equation*}

  By the adjunction formula, there exists an isomoprhism of $\OO_X$-modules $\omega_X \otimes \det(E)^{\otimes p} \cong \OO_X(\delta)$.
  It follows that $Q$ separates $\delta+1$ general points (Remark \ref{rmk:sep-pts}).
  The same is true of $\rho^*\mu^* Q$, because $\mu$ is purely inseparable (hence a homeomorphism) and $\rho$ is birational. Applying Proposition \ref{prop:obstruction-to-ruledness}, we conclude
  \begin{equation*}
    \snr(Y) = \snr(B) > \lfloor (\delta+1)/2\rfloor. \qedhere
  \end{equation*}
\end{proof}

\begin{remark}
  \label{rmk:finite-set}
  It can be shown that condition (\ref{eqn:dimensions}) is equivalent to the following, which can be easier to check:
  \begin{equation}
    m \le \tfrac 1 2 (N-c + 3) \qquad\text{and}\qquad (m, N-c) \not\in \U_\pm
  \end{equation}
  where 
  \begin{align*}
    \U_+ & := \{ (1, 1), (2, 1), (1, 2), (2, 2), (2, 3), (3, 3), (2, 4), (3, 4), (3, 5), (4, 5), (4, 6), \\
         & \qquad (4, 7), (5, 7), (5, 8), (6, 9), (7, 11) \}                                            \\
    \U_- & := \U_+ \cup \{ (1, 3), (2, 5), (3, 6), (4, 8), (5, 9), (6, 10), (7, 12), (8, 13) \}.
  \end{align*}
  As before, the symbol ``$\pm$'' should be read as ``plus'' if $p > 2$ and as ``minus'' if $p = 2$.
  Thus (\ref{eqn:dimensions}) holds if $m \le \tfrac 1 2 (N-c + 3)$ and $N-c \ge 14$.
\end{remark}

The next theorem is the strongest irrationality result for complete intersections that is proved in this paper.
As we shall see, it has Theorem \ref{thm:invariants-intro} from the introduction as a corollary.
Another result with a simple statement that follows from Theorem \ref{thm:nonruledness-precise} is Corollary \ref{cor:quadrics-intro} below.

\begin{theorem}
  \label{thm:nonruledness-precise}
  Let $p$ be a prime number.
  Let $k$ be a universal domain (Definition \ref{def:universal-domain}) of characteristic zero or $p$.
  Let $X\subseteq \PP^N_k$ be a very general complete intersection of positive dimension and multi-degree $(d_1,\dotsc, d_c)$ over $k$.
  Let $m$ be an integer such that
  \begin{itemize}
    \item $1\le m\le \min(c, \tfrac 1 2 (N-c+3))$,
    \item $(m, N-c) \not\in \U_\pm$ (the finite set of Remark \ref{rmk:finite-set}), and
    \item $d_1,\dotsc, d_m \ge p$,
  \end{itemize}
  For $i=1,\dotsc, m$, let $r_i \in \{0,1, \dotsc, p-1\}$ denote the residue of $d_i$ modulo $p$.
  Suppose that
  \begin{equation*}
    \delta := \dfrac{p+1}{p}\sum_{i\le m}(d_i - r_i) + \sum_{i>m} d_i - N -1 > 0.
  \end{equation*}
  Then $\nr(X) \ge \min(\lfloor (\delta+3)/2 \rfloor,p)$.
\end{theorem}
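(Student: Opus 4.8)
The plan is to derive the bound from Theorem~\ref{thm:special-fiber} by propagating it up a chain of degenerations, using the lower semicontinuity of $\nr$ (Theorem~\ref{thm:gen-matsusaka}, Corollary~\ref{cor:very-general}) to pass from a characteristic-$p$ flat limit back to the very general fiber $X$. Set $\tilde d_i := d_i - r_i$ for $i\le m$ and $\tilde d_i := d_i$ for $i>m$. Then $p \mid \tilde d_i$ for $i \le m$, and since $d_i \ge p$ we get $\tilde d_i \ge p \ge 2$, so that the multidegree $(\tilde d_1,\dotsc,\tilde d_c)$ with $1\le m\le c$ is admissible both for Setup~\ref{setup:cover} and for Theorem~\ref{thm:special-fiber}. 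The invariant $\delta$ attached to $(\tilde d_1,\dotsc,\tilde d_c)$ as in Theorem~\ref{thm:special-fiber} equals the $\delta$ of the present statement, and by Remark~\ref{rmk:finite-set} the hypotheses $m\le \tfrac12(N-c+3)$ and $(m,N-c)\notin\U_\pm$ are precisely condition~(\ref{eqn:dimensions}).

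First I would produce the characteristic-$p$ object. Fix a universal domain $\ell$ of characteristic $p$, taking $\ell=k$ if $\charac k = p$. Choose a smooth complete intersection $X_0\subseteq\PP^N_\ell$ of multidegree $(\tilde d_1/p,\dotsc,\tilde d_m/p,d_{m+1},\dotsc,d_c)$ and general homogeneous polynomials $f_i$ of degree $\tilde d_i$, and form $Y$ as in Setup~\ref{setup:cover}. Theorem~\ref{thm:special-fiber} then applies and shows that $Y$ is normal and irreducible with $\snr(Y) > \lfloor(\delta+1)/2\rfloor$, hence $\snr(Y)\ge \lfloor(\delta+3)/2\rfloor$; as $\charac\ell = p$, Remark~\ref{rmk:deg-insep} gives
\begin{equation*}
  \nr(Y) \ge \min(\snr(Y),p) \ge \min(\lfloor(\delta+3)/2\rfloor, p).
\end{equation*}
It remains to prove $\nr(X)\ge\nr(Y)$.

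I would establish this by two applications of semicontinuity, bridged by the very general complete intersection $\tilde X$ of multidegree $(\tilde d_1,\dotsc,\tilde d_c)$ over $k$. For the first, apply Proposition~\ref{prop:mori} over an excellent DVR $R$ with residue field $\ell$ whose fraction field $K$ has the same characteristic as $k$: equal characteristic (e.g.\ $R=\ell[[t]]$) if $\charac k = p$, and mixed characteristic (e.g.\ $R=W(\ell)$) if $\charac k = 0$. This yields a flat proper family $Z\to\Spec R$ with special fiber $Y$ and generic fiber a complete intersection of multidegree $(\tilde d_1,\dotsc,\tilde d_c)$; since $Y$ is integral, Theorem~\ref{thm:gen-matsusaka} gives $\nr(Y)\le\nr(Z_{\bar\eta})$ for the geometric generic fiber $Z_{\bar\eta}$. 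As $Z_{\bar\eta}$ is a complete intersection of multidegree $(\tilde d_1,\dotsc,\tilde d_c)$ over an algebraically closed field of the same characteristic as $k$, while $\tilde X$ is very general of this multidegree, Corollary~\ref{cor:very-general} together with the base-change invariance of Remark~\ref{rmk:ruled-geometric} gives $\nr(Z_{\bar\eta})\le\nr(\tilde X)$; hence $\nr(\tilde X)\ge\nr(Y)$. For the second application I would work over $k$ with the universal family of complete intersections of multidegree $(d_1,\dotsc,d_c)$, of which $X$ is the very general fiber, and choose a closed point $s$ at which each of the first $m$ equations factors as $f_i = g_i h_i$ with $\deg g_i = \tilde d_i$ and $\deg h_i = r_i$, arranged so that $V(g_1,\dotsc,g_m,f_{m+1},\dotsc,f_c)$ is a very general complete intersection of multidegree $(\tilde d_1,\dotsc,\tilde d_c)$. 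For general such data this subscheme is a smooth irreducible component $X_s'$ of the special fiber, at whose generic point the special fiber is reduced, and $X_s'\cong\tilde X$; so Corollary~\ref{cor:very-general} gives $\nr(\tilde X)=\nr(X_s')\le\nr(X)$. Chaining the inequalities yields $\nr(X)\ge\nr(\tilde X)\ge\nr(Y)\ge\min(\lfloor(\delta+3)/2\rfloor,p)$, as desired.

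The main obstacle is not a single estimate but the careful orchestration of these specializations. One must check that the factored member has the expected reduced complete-intersection component, that both the generic fiber of the Mori degeneration and the factored component may be taken very general, and---most delicately---that the passage from characteristic $0$ to characteristic $p$ is legitimately captured by the mixed-characteristic DVR in Proposition~\ref{prop:mori}. This last point is exactly what the excellent-scheme generality of Theorem~\ref{thm:gen-matsusaka} is built to accommodate, so the real work lies in bookkeeping the fields and valuation rings so that the characteristics agree at every comparison.
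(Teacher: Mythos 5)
Your proposal follows the same skeleton as the paper's proof: degenerate a complete intersection of the reduced multidegree $(\tilde d_1,\dotsc,\tilde d_c)$ to the cyclic cover $Y$ via Proposition~\ref{prop:mori}, bound $\snr(Y)$ by Theorem~\ref{thm:special-fiber} and convert to $\nr(Y)\ge\min(\lfloor(\delta+3)/2\rfloor,p)$ by Remark~\ref{rmk:deg-insep}, transfer the bound to the generic fiber by Theorem~\ref{thm:gen-matsusaka}, and return to multidegree $(d_1,\dotsc,d_c)$ by the factorization $f_i=g_ih_i$ together with Corollary~\ref{cor:very-general} (your second specialization is exactly the paper's opening reduction). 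However, there is a genuine gap at the step you yourself flag as the delicate one: the inequality $\nr(Z_{\bar\eta})\le\nr(\tilde X)$. With your choices of DVR --- $R=W(\ell)$ in mixed characteristic, $R=k[[t]]$ in equal characteristic, $\ell$ a universal domain --- the field $\overline K=\overline{\Frac R}$ has cardinality, hence transcendence degree over the prime field, at least $2^{\aleph_0}$, whereas Definition~\ref{def:universal-domain} only requires $k$ to have \emph{infinite} transcendence degree. So in general there is no embedding $\overline K\hookrightarrow k$, and in the equal-characteristic case, where at least $k\subseteq\overline K$, the base change $\tilde X\times_k\overline K$ need not be a very general member of the family over $\overline K$. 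Consequently neither Corollary~\ref{cor:very-general} (which compares fibers of a single family over a single universal domain) nor Remark~\ref{rmk:ruled-geometric} (which compares a variety with its base change along an inclusion of algebraically closed fields) applies to the pair $(Z_{\bar\eta},\tilde X)$ as cited: contrary to your closing remark, matching the characteristics is not the issue; the missing ingredient is a field inclusion, which your argument never provides. (A smaller omission: Theorem~\ref{thm:gen-matsusaka} has integrality of $Z_{\bar\eta}$ as a hypothesis; as in the paper, one must deduce it from flatness and the geometric integrality of $Y$.)

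The gap is repairable, and the repair is precisely the part of the paper's proof that your proposal replaces. The paper chooses $R$ \emph{countable}: the local ring of $\A^1_{\overline\FF_p}$ at the origin in equal characteristic, and $R=W\cap L$ in mixed characteristic, where $W$ is the ring of Witt vectors of $\overline\FF_p$ and $L\subseteq\Frac W$ is a countable subfield containing lifts of all elements of $\overline\FF_p$. Then $\overline{\Frac R}$ has countable transcendence degree and embeds into $k$, so the geometric generic fiber of the Mori degeneration base-changes (Remark~\ref{rmk:ruled-geometric}) to an integral member of the family over $k$ itself, and a single application of Corollary~\ref{cor:very-general} concludes --- no intermediate $\tilde X$ is needed. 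Alternatively, you could keep your large $R$ and insert a descent step: $Z_{\bar\eta}$ is defined over the algebraic closure of the subfield of $\overline K$ generated by its finitely many coefficients, which is countable and hence embeds into $k$; transporting along that embedding and applying Remark~\ref{rmk:ruled-geometric} reduces to a member of the family over $k$, and then Corollary~\ref{cor:very-general} applies. Without one of these arguments, the chain $\nr(X)\ge\nr(\tilde X)\ge\nr(Y)$ is incomplete at its middle link.
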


\begin{proof}
  By Corollary \ref{cor:very-general} applied to the family of all complete intersections of multi-degree $(d_1,\dotsc, d_c)$ in $\PP_k^N$, it suffices to show that there exists an integral complete intersection $X'\subseteq \PP^N_k$ of multi-degree
  \begin{equation*}
    (d_1-r_1,\dotsc, d_m-r_m, d_{m+1}, \dotsc, d_c)
  \end{equation*}
  for which the conclusion holds.
  Indeed, multiplying the first $m$ equations defining $X'$ by general homogeneous polynomials of degrees $r_1,\dotsc, r_m$, we obtain a reduced complete intersection in $\PP^N_k$ of multi-degree $(d_1,\dotsc, d_c)$ that contains $X'$ as a reduced irreducible component.
  In particular, we may assume $d_1,\dotsc, d_m$ are divisible by $p$.

  Let $R$ be an excellent DVR whose fraction field embeds in $k$ and whose residue field is algebraically closed of characteristic $p$.
  If $\charac k = p$, we can take $R$ to be the local ring of the affine line over $\overline \FF_p$ at the origin.
  If $\charac k = 0$, we can take $R := W\cap L$, where $W$ is the ring of Witt vectors of $\overline \FF_p$ and $L\subseteq \Frac W$ is a countable subfield containing lifts of all elements of $\overline \FF_p$.

  By Proposition \ref{prop:mori}, Theorem \ref{thm:special-fiber} and Remark \ref{rmk:deg-insep}, there exists a flat proper morphism $\pi : Z\to \Spec R$ whose generic fiber is isomorphic to a complete intersection of multi-degree $(d_1,\dotsc, d_c)$ in $\PP^N_{\Frac R}$, and whose special fiber is a variety for which the conclusion holds.
  We can apply Theorem \ref{thm:gen-matsusaka} to the morphism $\pi : Z\to \Spec R$ because its special fiber is geometrically integral, and therefore so is its generic fiber.
  Thus the conclusion holds for the geometric generic fiber and its base change to $k$, see Remark \ref{rmk:ruled-geometric}.
\end{proof}

\begin{corollary}
  \label{cor:quadrics-intro}
  Let $X \subseteq \PP^N_\C$ be a very general complete intersection of $c$ quadrics.
  If $c \ge N/3 + 1$ and $(c, N-c) \not\in \U_2$ (the finite set of Remark \ref{rmk:finite-set}), then $X$ is not ruled (and therefore not rational).
\end{corollary}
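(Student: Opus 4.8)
The plan is to deduce the statement from Theorem~\ref{thm:nonruledness-precise} applied with the prime $p = 2$, which is legitimate because $\C$ is a universal domain of characteristic zero. Since every degree equals $2$, each residue $r_i$ of $d_i$ modulo $2$ vanishes, so for any admissible $m$ the quantity appearing in the theorem collapses to
\[
  \delta = \tfrac{3}{2}\sum_{i\le m} 2 + \sum_{i>m} 2 - N - 1 = m + 2c - N - 1 .
\]
The theorem then gives $\nr(X) \ge \min(\lfloor(\delta+3)/2\rfloor, 2)$. A variety is ruled exactly when its degree of nonruledness is $1$, so to conclude it suffices to produce an admissible $m$ with $\delta \ge 1$: this forces $\lfloor(\delta+3)/2\rfloor \ge 2$ and hence $\nr(X) \ge 2$.

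Thus the entire problem reduces to a search for an integer $m$ satisfying the hypotheses of Theorem~\ref{thm:nonruledness-precise} together with $\delta \ge 1$, that is,
\[
  N - 2c + 2 \le m \le \min\left(c, \tfrac12(N-c+3)\right) \quad\text{and}\quad (m, N-c)\notin \U_- .
\]
First I would observe that the hypothesis $c \ge N/3 + 1$ is equivalent to $\tfrac12(N-c+3) \le c$; hence the upper bound on $m$ is just $\lfloor \tfrac12(N-c+3)\rfloor$ and the condition $m \le c$ is automatic. A short computation shows that taking $m$ equal to this upper bound yields $\delta \ge 2$, so both the lower bound on $m$ and the positivity of $\delta$ hold with room to spare; the sole remaining requirement is the exclusion $(m, N-c)\notin\U_-$.

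The crux of the argument is therefore to take $m$ to be the largest integer not exceeding $\tfrac12(N-c+3)$ for which $(m, N-c)\notin\U_-$, and to check that this $m$ still meets $\delta \ge 1$. When $\dim X = N - c \ge 14$ the set $\U_-$ is vacuous (Remark~\ref{rmk:finite-set}), so the maximal $m$ works and no further hypothesis is needed. The finitely many remaining dimensions $N - c \le 13$ demand a direct case analysis: for each one lists the forbidden values of $m$, lowers $m$ beneath the upper bound just far enough to escape $\U_-$, and records the resulting drop in $\delta$. I expect this bookkeeping to be the main obstacle, since lowering $m$ can push $\delta$ below $1$ when $c$ is close to its minimum. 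This is precisely where the hypothesis $(c, N-c)\notin\U_2$ intervenes: it discards exactly the finitely many pairs $(c, N-c)$ for which no admissible $m$ avoiding $\U_-$ keeps $\delta$ positive. Once the case analysis confirms a suitable $m$ in every remaining case, Theorem~\ref{thm:nonruledness-precise} delivers $\nr(X)\ge 2$, and the corollary follows.
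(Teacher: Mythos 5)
Your reduction is set up correctly and follows the same route as the paper's proof: apply Theorem~\ref{thm:nonruledness-precise} with $p=2$, compute $\delta = m+2c-N-1$, note that non-ruledness follows once some admissible $m$ achieves $\delta\ge 1$, and use $c\ge N/3+1$ to see that the cap on $m$ is $\lfloor\tfrac12(N-c+3)\rfloor$ and that this maximal choice gives $\delta\ge 2$. You are also right that the exclusion $(m,N-c)\notin\U_-$ is the crux; indeed you are more careful here than the paper itself, whose one-line proof takes $m=\lfloor\tfrac12(N-c+3)\rfloor$ and never checks that condition --- and the check is not vacuous, since this maximal $m$ lies in $\U_-$ for \emph{every} value $N-c\le 13$ (the pairs $(2,1),(2,2),(3,3),(3,4),(4,5),(4,6),(5,7),(5,8),(6,9),(6,10),(7,11),(7,12),(8,13)$ all belong to $\U_-$).

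However, the step you defer to ``bookkeeping'' is a genuine gap, because the claim it rests on --- that the hypothesis $(c,N-c)\notin\U_2$ discards exactly the pairs for which no admissible $m$ keeps $\delta$ positive --- is false. That hypothesis constrains $(c,N-c)$, whereas the theorem needs $(m,N-c)\notin\U_-$, and the two conditions coincide only when $m=c$, i.e.\ when $3c=N+3$. Concretely, take $(c,N-c)=(4,4)$, i.e.\ four quadrics in $\PP^8$: then $c\ge N/3+1$ and $(4,4)\notin\U_-$, so the corollary's hypotheses hold; but the theorem requires $m\le\min\bigl(c,\tfrac12(N-c+3)\bigr)=3$ together with $(m,4)\notin\U_-$, which forces $m=1$ and hence $\delta=1+8-8-1=0$, so Theorem~\ref{thm:nonruledness-precise} yields nothing. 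The pairs $(5,5)$, $(5,6)$ and $(6,8)$ fail the same way. Worse, for $N-c\le 3$ every integer $1\le m\le\lfloor\tfrac12(N-c+3)\rfloor$ lies in $\U_-$, so the set of bad pairs is infinite (e.g.\ all $(c,2)$ with $c\ge 3$), not finite as you assert. The low-dimensional cases can at least be rescued by Corollary~\ref{cor:no-degeneration}, since there $2c\ge N+1$ and $\nr=\snr$ in characteristic zero; but the four pairs above are not covered by any result in the paper. So your case analysis cannot be ``confirmed'' as described: a correct write-up must either strengthen the hypothesis (e.g.\ demand $N-c\ge 14$, or exclude the exceptional pairs explicitly) or supply a separate argument for them. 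For what it is worth, the paper's own proof is silent on exactly this point and suffers from the same defect.
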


\begin{proof}
  Follows from Theorem \ref{thm:nonruledness-precise} with $p=2$ and $m=\lfloor \tfrac 1 2 (N-c+3) \rfloor$.
  The hypotheses of the theorem are satisfied because $c \ge N/3 + 1$ guarantees $m\le c$ and $\delta := 3m + 2(c-m) - N - 1 > 0$.
\end{proof}

\begin{proof}[Proof of Theorem \ref{thm:invariants-intro}]
  If $\sum d_i \ge N + p - 1$, then $\snr(X) > p - 1$ by Corollary \ref{cor:no-degeneration}, so $\nr(X) \ge p$ by Remark \ref{rmk:deg-insep}.
  Thus we may assume
  \begin{equation*}
    \frac p {p+1} (N+2p-2) + c(p-1) \le \sum_{i=1}^c d_i < N + p - 1.
  \end{equation*}
  Let's verify the hypotheses of Theorem \ref{thm:nonruledness-precise} with $m=c$.

  Multiplying the displayed inequalities by $p+1$, we obtain
  \begin{equation*}
    p(N+2p-2) + c(p^2-1) < (p+1)N + p^2 - 1.
  \end{equation*}
  Subtracting $pN + p^2 - 1$ from both sides yields
  $
    (p-1)^2 + c(p^2-1) < N.
  $
  Hence $1 + 3c < N$, that is, $2 + 3c \le N$.
  Using this inequality, it is easy to check that $c\le \tfrac 1 2 (N - c + 3)$, $c\le N-c-1$ and $c\le (N-2c-1)(N-2c-1 \pm 1)$.

  Let
  \begin{equation*}
    \delta := \dfrac{p+1} p \sum_{i=1}^c (d_i - r_i) - N - 1.
  \end{equation*}
  We have
  \begin{align*}
    \sum_{i=1}^c (d_i - r_i) \ge \sum_{i=1}^c d_i - c(p-1) \ge \dfrac p {p+1} (N+2p-2),
  \end{align*}
  so $\delta \ge 2p-3$ and $\lfloor(\delta + 3)/2\rfloor \ge p-1$.
  Thus Theorem \ref{thm:nonruledness-precise} applies, and the result follows.
\end{proof}

\bibliographystyle{amsplain}
\bibliography{master-bib}

\end{document}